\documentclass[11pt,a4paper,psamsfonts]{amsart}
\usepackage{amssymb,amscd,amsxtra,calc}
\usepackage{cmmib57}
\usepackage{graphicx}

\usepackage{lipsum}
\usepackage{amsfonts}
\usepackage{graphicx}
\usepackage{epstopdf}
\usepackage{algorithmic}
\usepackage{float}
\restylefloat{table}
\usepackage{placeins}
\usepackage{array}
\ifpdf
  \DeclareGraphicsExtensions{.eps,.pdf,.png,.jpg}
\else
  \DeclareGraphicsExtensions{.eps}
\fi


\usepackage{subcaption}

\setlength{\topmargin}{0cm}
\setlength{\oddsidemargin}{0cm}
\setlength{\evensidemargin}{0cm}
\setlength{\marginparwidth}{0cm}
\setlength{\marginparsep}{0cm}

\setlength{\textheight}{\paperheight - 2in -35pt}
\setlength{\textwidth}{\paperwidth - 2in}
\setlength{\headheight}{12.5pt}
\setlength{\headsep}{25pt}
\setlength{\footskip}{30pt}

\pagestyle{headings}

\theoremstyle{plain}
    \newtheorem{thm}{Theorem}[section]

    \newtheorem{lemma}[thm]{Lemma}
    \newtheorem{proposition}[thm]{Proposition}

    \newtheorem{theorem}[thm]{Theorem}

\theoremstyle{definition}

\theoremstyle{remark}
    
    \newtheorem{example}[thm]{Example}

\newcommand{\authorfootnotes}{\renewcommand\thefootnote{\@fnsymbol\c@footnote}}

\usepackage{caption} 
\captionsetup[table]{skip=10pt}


 \title[Unconstrained optimisation on Riemannian manifolds]{Unconstrained optimisation on Riemannian manifolds}
 \author{Tuyen Trung Truong}
   \address{Department of Mathematics, University of Oslo, Blindern 0851 Oslo, Norway}
  \email{tuyentt@math.uio.no}
\thanks{Department of Mathematics, University of Oslo, Blindern 0851 Oslo, Norway, Email address: tuyentt@math.uio.no}
\keywords{Compact metric space; Deep Neural Networks; Global convergence; Iterative optimisation; Nash's embedding theorem; New Q-Newton's method; Random dynamical systems; Riemannian manifolds}

    \date{\today}
   
   \subjclass[2010]{}

\begin{document}
\maketitle

\begin{abstract}
In this paper, we give explicit descriptions of versions of (Local-) Backtracking Gradient Descent and New Q-Newton's method to the Riemannian setting. Then, we combine Nash's embedding theorem and some good local retractions to extend our previous convergence results for the above methods on Euclidean and Banach/Hilbert spaces to Riemannian manifolds. Here are some easy to state  consequences of results in this paper, where X is a general Riemannian manifold of finite dimension and $f:X\rightarrow \mathbb{R}$ a $C^2$ function which is Morse (that is, all its critical points are non-degenerate). 

{\bf Theorem.} For random choices of the hyperparameters in the Riemanian Local Backtracking Gradient Descent algorithm and for random choices of the initial point $x_0$, the sequence $\{x_n\}$ constructed by the algorithm either (i) converges to a local minimum of $f$ or (ii) eventually leaves every compact subsets of $X$ (in other words, diverges to infinity on $X$). If $f$ has compact sublevels, then only the former alternative happens. The convergence rate is the same as in the classical paper by Armijo. 

{\bf Theorem.} Assume that $f$ is $C^3$. For random choices of the hyperparametes in the Riemannian New Q-Newton's method, if the sequence constructed by the algorithm converges, then the limit is a critical point of $f$. We have a local Stable-Center manifold theorem, near saddle points of $f$, for the dynamical system associated to the algorithm. If the limit point is a non-degenerate minimum point, then the rate of convergence is quadratic. If moreover $X$ is an open subset of a Lie group and the initial point $x_0$ is chosen randomly, then we can globally avoid saddle points.  

We provide several experiments showing that treating a Euclidean optimisation, with a singular cost function or with constraints, as a Riemannian optimisation  could be beneficial. As an application, we propose a  general method using Riemannian Backtracking GD to find minimum of a function on a bounded ball in a Euclidean space, and do explicit calculations for calculating the smallest eigenvalue of a symmetric square matrix. 

\end{abstract}


\subsection{Introduction}

Optimisation is important in both academic research and real life applications, ranging from mathematics, physics, computer science, to industrial production and commercial products. One exciting recent development is the paradigm of Deep Learning, where a challenging task (such as playing games, image and video classifications, document and sound processing - one can ready see applications of these in one's smart phones) can be deduced to a large scale optimisation problem. While the major impression for Deep Learning comes from industry, there are also directions in theoretical mathematics utilising it. For example, an active research direction in Automated proof checking is to utilise achievements in Natural Language Processing to translate informal written mathematical proofs to formal proofs, which then can be checked by computers for the correctness. This can be handy in difficult situations, for example when the proof is too large or too specialised to follow. Besides the benefits to mathematics itself, the success of this program in turn can have potentially further applications in computer science and hence the society at large. Indeed, roughly speaking, the Curry-Howard correspondence says that checking the correctness of a computer program is the same as checking the correctness of a corresponding mathematical proof.   

Note that, since finding global minima is NP-hard and also because of other reasons such as the compute expense and deadlines, practically one must rely on iterative algorithms to solve large scale optimisation. As such, it is important to have a good iterative algorithm which has strong theoretical guarantee under conditions which are both general and easy to check on the code functions. 

The most familiar setting of optimisation is on Euclidean spaces. For example, training Deep Neural Networks can be deduced to an optimisation on the Euclidean space of parameters (i.e. weights and biases) of the Deep Neural Network. One considers a function $f:\mathbb{R}^m\rightarrow \mathbb{R}$, and wants to find a local minimum. A local minimum is a critical point, that is, it satisfies a first order condition $\nabla f(x_{\infty})=0$. To be able to discuss more, we assume (in this paragraph only, except otherwise stated) that $f$ is moreover $C^2$ near $x_{\infty}$. We say that $x_{\infty}$ is non-degenerate if $\nabla ^2f(x_{\infty})$ is invertible. We say that $x_{\infty}$ is a saddle point if $\nabla ^2f(x_{\infty})$ is invertible and has both positive and negative eigenvalues. We say that $x_{\infty}$ is a generalised saddle point if $\nabla ^2f(x_{\infty})$ has at least one negative eigenvalue. Since saddle points are dominant for functions in higher dimensions and can have bad behaviour \cite{bray-dean, dauphin-pascanu-gulcehre-cho-ganguli-bengjo},  both for general functions and for functions encountered in Deep Learning, it is important to guarantee that the limit point ({\bf if exists}) is not a (generalised) saddle point. 

In Euclidean optimisation, one usually uses first order optimisation algorithms (represented by  gradient descent methods) and second order optimisation algorithms (represented by Newton's methods). Below is a very brief overview of these, please see the references and references therein for more detail. 

The general scheme for gradient descent (GD) method \cite{cauchy} is as follows: We start from an initial point $x_0$, and construct a sequence $x_{n+1}=x_n-\delta _n\nabla f(x_n)$, where $\delta _n>0$ is an appropriately chosen positive number. The most basic form of these methods is the Standard GD, where $\delta _n=\delta _0$ is a constant. Another basic form (Backtracking GD) is to require Armijo's condition: One fixes a positive number $0<\alpha <0$ and checks that 
\begin{eqnarray*}
f(x_n-\delta _n\nabla f(x_n))-f(x_n)\leq -\alpha \delta _n||\nabla f(x_n)||^2. 
\end{eqnarray*}
The main difference is that in Backtracking GD one uses function evaluations, which on the one hand  can be expensive, but on the other hand guarantee the descent property. There are over $100$ modifications of Standard GD, and a few modifications of Backtracking GD. Some of the most popular modifications of Standard GD are NAG, Momentum, Adam, Adadelta, and RMSProp (see \cite{ruder}), while some modifications of Backtracking GD are Local Backtracking GD and Continuous form of Backtracking GD \cite{truong} and Unbounded Backtracking GD \cite{truong4}. Most of the existing literature uses modifications of Standard GD, and can prove theoretical results only under extra assumptions such as $f$ is $C^{1,1}_L$ (meaning that the gradient $\nabla f$ is $C^{1,1}_L$, and where the learning rates $\delta _n$ are required to be of the order $1/L$)  and/or convexity of $f$, and the typical proven results are: i) If $\{x_{n_j}\}$ is a convergent subsequence, then $\lim _{j\rightarrow\infty}\nabla f(x_{n_j})=0$ (in many papers, this property is called "convergence", which could make readers confused with the rigorous mathematical notion of "convergence" for the sequence $\{x_n\}$ itself), and ii) for a random choice of the initial point $x_0$, if the sequence $\{x_n\}$ converges to $x_{\infty}$, then $x_{\infty}$ cannot be a generalised saddle point \cite{lee-simchowitz-jordan-recht, panageas-piliouras}. 

We note that usually a result on GD \cite{robbins-monro}, where among many things it is required that $\lim _{n\rightarrow\infty}\delta _n=0$, seems a standard reference to justify the use of GD in the stochastic setting (such as with mini-batches in Deep Neural Networks). However,  a very recent implementation of it \cite{merti-etal} shows that the performance (archiving about $84\%$ test accuracy for the dataset CIFAR10 \cite{cifar} on the Deep Neural Network Resnet18 \cite{He}) is not very good compared with the popular modifications of Standard GD or of Backtracking GD. Why its performance is not extremely good can be explained as follows (see \cite{truong-nguyen} for more detail): we expect that the sequence $\{x_n\}$ converges, in which case $\delta _n$ should vary very little in the long run (close to the best learning rate one can choose at the limit point) and hence in particular we should not choose $\lim _{n\rightarrow\infty}\delta _n=0$ because this can slowdown convergence and/or force convergence to bad points.  

To our knowledge, while not widely used at the moment, Backtracking GD and its modifications currently are best theoretically guaranteed. Here are some properties: 

- The sequence $\{x_n\}$ either converges to a critical point of $f$ or diverges to infinity, {\bf if}: $f$ is real analytic or more generally satisfies the so-called Losjasiewicz gradient condition \cite{absil-mahony-andrews} or f has at most countably many critical points \cite{truong-nguyen}. In the general case, one can draw some useful conclusions \cite{truong-nguyen}. 

- A modification, called Local Backtracking GD allows avoidance of saddle points, while still has the convergence result in the previous paragraph \cite{truong}. 
Another modification, which allows the choice of learning rates to depend {\bf continuously} on the points $x_n$, also has the same properties as described in the previous sentence \cite{truong}. 

- Some further modifications (including combining with Momentum and NAG), with good theoretical justifications, allow to implement Backtracking GD into large scale optimisation in Deep Neural Networks \cite{truong-nguyen}, and for a more recent work see \cite{vaswani-etal}. The performance (on various datasets, including CIFAR10 and CIFAR100 \cite{cifar}) is very good and stable (when the hyperparameters are changed). For example, on CIFAR10 and Resnet18, \cite{truong-nguyen} reports that the validation accuracy for Backtracking GD is 91.64$\%$, while for a combination of Backtracking GD and Momentum achieves 93.70$\%$ and yet another combination of Backtracking GD and NAG achieves 93.85$\%$. (Similar performance is reported subsequently by other authors \cite{vaswani-etal}.) These validation accuracies are stable across different choices of initial learning rates, and the time needed to run them is not much slower than that for running Standard GD (which has no systematical and good way to pick out a learning rate to start with, except using either grid search or some heuristics lacking good theoretical justification). 

- The Unbounded Backtracking GD version \cite{truong4}  -  in which learning rates are not required to be uniformly bounded - can indeed improve the performance, compared to the original Backtracking GD, if the sequence $\{x_n\}$ converges to a {\bf degenerate} critical point \cite{truong-nguyen2}. 

- While this fact is trivial, it is worth noting that if $f$ is in $C^{1,1}_L$ and the learning rate $\delta$ in Standard GD is chosen so that $\delta <1/L$, then the Standard GD schedule is a special case of Backtracking GD. 

One important point which makes Backtracking GD different is that from Armijo's condition one can prove that $\lim _{n\rightarrow\infty}||x_{n+1}-x_n||=0$. A simple but crucial fact which has been used in \cite{truong-nguyen} and in our other papers is that $\mathbb{R}^m$ is, {\bf topologically}, a subspace of the real projective space $\mathbb{P}^m$, while the metric on $\mathbb{R}^m$ is bounded from below by the spherical metric on $\mathbb{P}^m$. This allows us to use the results in \cite{asic-adamovic} for {\bf all} subsets of $\mathbb{R}^m$ and not just to compact subsets of $\mathbb{R}^m$ as in prior work (such as Chapter 12 in \cite{lange}). 

If one wants to have faster convergence (i.e. less iterations), then one can utilise second order methods. A prototype of these is Newton's method, which applies to a $C^2$ function $f$. We start from an initial point $x_0$, and proceed as follows: if the Hessian $\nabla ^2f(x_n)$ is invertible, then we define $x_{n+1}=x_n-\nabla ^2f(x_n)^{-1}.\nabla f(x_n)$.  The good thing about Newton's method is that if it converges, then usually it converges very quick (rate of convergence can be quadratic). An extreme case is for $f$ a quadratic function, when Newton's method converges after $1$ step, no matter what type the function $f$ is. However, compared to (Backtracking) GD, Newton's method has the following drawbacks. First, it does not guarantee convergence. Second, it has the tendency to be attracted to the nearest critical point, even if that is a saddle point or local maximum. Third, it is expensive to run Newton's method, which is an enormous obstacle in large scale optimisation (for example, in current popular Deep Neural Networks, one needs to work with spaces of dimensions in the size of $10^9$). Fourth, there is an uncertainty of what to do when the Hessian $\nabla ^2f(x_n)$ is not invertible. 

There are so-called quasi-Newton's methods, which aim to resolve the third point above by using rank 1 matrices, for example Gauss-Newton, BFGS and SR-1, see e.g. \cite{absil-etal, boumal} . These also try to overcome the fourth point above by adding a very big matrix to make the resulting a positive matrix. However, for all these methods, there are no theoretical guarantees about convergence and avoidance of saddle points. Also, it is not clear if these methods have the same convergence rate as the original Newton's method. 

In a recent paper \cite{truong-etal}, we proposed a new modification of Newton's method called New Q-Newton's method, which has the following properties: i) it provides a simple way to deal with the case the Hessian is not invertible (by adding a term of the form $\delta ||\nabla f(x)||^{\alpha}$ into $\nabla ^2f(x)$ whenever the latter is not invertible), ii) it avoids saddle points, and iii) if it converges then the limit point is a critical point of $f$ and the rate of convergence is the same as that of the original Newton's method. Experiments show that it performs very well in small scale optimisation, against various other modifications of Newton's method. The only remaining drawbacks  are that it does not guarantee convergence and it is expensive to use in large scale optimisation.  

There are, among many other, two ways one can go to  generalise the above results: one is to go to infinite dimension (that is Hilbert and Banach spaces) and another is to go to Riemannian manifolds.    

The infinite dimension setting is natural for example for solving PDE, in particular those coming from physics. Again, in this case, most of the existing work uses modifications of Standard GD, and requires assumptions such as $C^{1,1}_L$ and/or convexity,  please check the references in \cite{truong3} and references therein. We note that in this setting, also (Local-) Backtracking GD can be defined and results similar to the finite Euclidean space case can be established \cite{truong3}. For the possibility of extending New Q-Newton's method to this setting, please see the discussions in \cite{truong-etal}. 

The Riemannian manifold setting also offers many interesting and useful applications, such as in the Netflix prize competition \cite{netflix}. Another example is Fisher information metric, which is a natural choice of Riemannian metric on a space of parametrised statistical models (such as Deep Neural Networks), the modern theory is largely due to S. Amari \cite{amari-nagaoka}. Yet one other interesting case is that of constrained optimisation problems on Riemannian manifolds, if the constraints give rise naturally to a Riemannian submanifold. Again, in this case, most of the existing work uses modifications of Standard GD or quasi-Newton's methods, and obtains similar results and has similar drawbacks as described before. Besides  global constraints on the cost function $f$ as mentioned before ($C^{1,1}_L$ and or convexity), usually new global geometric constraints, on bounds for curvatures of the Riemannian manifold, are also required. A standard textbook for this subject is \cite{absil-etal}, and a more up-to-date reference is \cite{boumal}. We note that the cases considered so far in most of the existing literature are for matrix manifolds.

The main purpose of this paper is to define some explicit versions of (Local) Backtracking GD and New Q-Newton's method, and extend the mentioned properties to the Riemannian manifold setting. In Example in \cite{truong2}, we illustrated that for Backtracking GD in the case the Riemannian manifold is an open subset $\Omega$ of a Euclidean space: in which case the learning rate must be $<dist(x,\partial \Omega )/||\nabla f(x)||$, and we can understand the statement "$\{x_n\}$ diverges to infinity" as that any cluster point of $\{x_n\}$ is on the boundary of the open set. In some previous work, we alluded to such extensions for (Local) Backtracking GD, mentioning that Backtracking GD is local in nature, but without further details and under a technical assumption that there is a compact metric space $Z$ and an embedding $X$ into $Z$ so that the Riemannian metric on $X$ is bounded from below by the induced metric from $Z$. An open question there was to extend the results to general Riemannian manifolds. In this paper, we show that the extension is available on all Riemannian manifolds by using Nash's embedding theorem \cite{nash1, kuiper, nash2} and the fact mentioned above concerning $\mathbb{R}^m$ and $\mathbb{P}^m$. The details, together with the New Q-Newton's method on Riemannian manifolds and some other additions, are written down explicitly in this paper. 

We remark that in this paper we use only {\bf local} quantities in the definitions and results, in particular relying only on {\bf local} Retractions (existing on all Riemannian manifolds) - see Subsection \ref{SubsectionPreliminaries}. Local quantities allow flexibility and ease of use. For example, with a glance one could say quickly whether a given function is $C^1$ or $C^2$, or locally Lipschitz continuous. (In contrast, global properties such as Lipschitz continuous or convexity can be cumbersome to define - need to use global parallel transport - and difficult to check or be satisfied in practice.) Also, in many cases, we have no restrictions on the geometry (such as bounds on curvatures) of the  Riemannian manifold in question.  Here are some easy-to-state consequences of results in this paper. 

\begin{theorem} Let $X$ be a Riemannian manifold of finite dimension, and $f:X\rightarrow \mathbb{R}$ a $C^2$ function which is Morse (that is, all its critical points are non-degenerate). Then for random choices of the hyperparameters in the Local Backtracking Gradient Descent algorithm (which depends only on {\bf local} Lipschitz constants of the gradient of $f$) and for random choices of the initial point $x_0$, the sequence $\{x_n\}$ constructed by the algorithm either (i) converges to a local minimum of $f$ or (ii) eventually leaves every compact subsets of $X$ (in other words, diverges to infinity on $X$). If $f$ has compact sublevels, then only the former alternative happens. The convergence rate is the same as in the classical paper by Armijo. 
\label{TheoremConsequence1}\end{theorem}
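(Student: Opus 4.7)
The plan is to reduce the problem to the Euclidean setting via Nash's isometric embedding theorem and then to transport the convergence arguments used in the author's earlier papers on Euclidean Backtracking GD. First I would fix on $X$ a family of local retractions $R_x \colon U_x \subset T_x X \to X$ (which exist on any Riemannian manifold) and define the iteration $x_{n+1} = R_{x_n}(-\delta_n \nabla f(x_n))$, where $\delta_n$ is chosen by backtracking to satisfy the Riemannian Armijo condition $f(x_{n+1}) - f(x_n) \leq -\alpha \delta_n \|\nabla f(x_n)\|^2$ and is additionally capped by a local bound $\hat\delta(x_n)$ built from the local Lipschitz constant of $\nabla f$ and the radius on which $R_{x_n}$ is defined. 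A first-order Taylor expansion along the retraction shows that such a $\delta_n$ exists, that it is uniformly bounded below on any subset of $X$ on which $\|\nabla f\|$ is bounded away from zero and the local Lipschitz and retraction data are controlled, and that the telescoping estimate $\sum_n \delta_n \|\nabla f(x_n)\|^2 < \infty$ holds.

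Next I would invoke Nash's embedding theorem to view $(X,g)$ isometrically as a submanifold of some $\mathbb{R}^N$. The Armijo descent combined with the lower bound on $\delta_n$ gives, as in the Euclidean argument, $\lim_{n\to\infty} d_X(x_n,x_{n+1}) = 0$, and since the ambient Euclidean distance is dominated by the Riemannian distance, also $\lim_{n\to\infty}\|x_{n+1}-x_n\|_{\mathbb{R}^N}=0$. I would then use the topological embedding $\mathbb{R}^N \hookrightarrow \mathbb{P}^N$ highlighted in the introduction (with spherical distance bounded above by the Euclidean one), so that $\{x_n\}$ inside the compact $\mathbb{P}^N$ is a sequence with consecutive spherical distance tending to zero. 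The Asic--Adamovic theorem then forces the cluster set $C \subseteq \mathbb{P}^N$ to be a nonempty, closed, connected subset.

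I would then split $C = (C \cap X) \sqcup (C \setminus X)$. Any cluster point in $X$ must be a critical point of $f$: otherwise, in a neighbourhood of it the locally uniform lower bound on $\delta_n$ would contradict the summability $\sum_n \delta_n\|\nabla f(x_n)\|^2<\infty$. Since $f$ is Morse, critical points in $X$ are isolated, so $C\cap X$ is totally disconnected. Connectedness of $C$ therefore leaves only two options: either $C=\{x_\infty\}$ with $x_\infty\in X$ a critical point (and $\{x_n\}$ converges to $x_\infty$), or $C\cap X = \varnothing$ (in which case $\{x_n\}$ eventually leaves every compact subset of $X$). When $f$ has compact sublevels, monotonicity $f(x_{n+1})\le f(x_n)$ confines $\{x_n\}$ to the compact sublevel $\{f\le f(x_0)\}$, forcing the first alternative.

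The final ingredient is avoidance of saddle points. For a random choice of the hyperparameters, the associated iteration map, read in a local chart around any critical point, is a local $C^1$ diffeomorphism whose linearisation at a generalised saddle has an eigenvalue of modulus greater than one; a standard Stable-Center Manifold argument then shows that the set of initial points whose orbit converges to that saddle is a positive-codimension immersed submanifold of $X$, hence Lebesgue-null in any local chart. Taking a countable union over the (countably many, by Morse-ness) critical points of saddle type preserves nullity, so for random $x_0$ the limit in case (i) is a local minimum almost surely. The Armijo-style convergence rate near a non-degenerate minimum is then obtained by transporting Armijo's original estimate through the local retraction. I expect the main obstacle to be the careful bookkeeping of \emph{local} quantities: verifying that the local Lipschitz constants of $\nabla f$, the domains of the local retractions, and the ambient Nash-embedded geometry can all be made locally uniform along the trajectory, so that the lower bound on $\delta_n$ used to rule out non-critical cluster points is genuinely available on every compact subset of $X$.
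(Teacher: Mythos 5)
Your proposal follows essentially the same path as the paper: Nash embed $X$ into $\mathbb{R}^N$, push forward into the compact $\mathbb{P}^N$, invoke Asic--Adamovic to make the cluster set connected, use Morse-ness (isolated critical points) to conclude convergence-or-divergence, and finish with a Stable-Center Manifold argument for saddle avoidance. This is exactly the reduction the paper carries out via Theorem \ref{TheoremConvergenceSequenceRiemannianManifolds} and the proof of Theorem \ref{Theorem2}.

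Two steps, however, are stated but not justified, and both are where the real work lives. First, you assert that ``for a random choice of the hyperparameters, the associated iteration map \ldots is a local $C^1$ diffeomorphism,'' but you never say why randomness produces smoothness. The iteration $x\mapsto H(x)=R_x(-\widehat\delta(x)\,grad f(x))$ involves the step-size function $\widehat\delta(x)$, which is the largest element of the discrete set $\{\beta^j\delta_0\}$ below $\alpha/L(x)$; this function is piecewise constant and \emph{jumps} wherever $\alpha/L(x)$ crosses a point of that discrete set, so $H$ is generically only piecewise $C^1$. The role of random $(\alpha,\beta,\delta_0)$ is precisely to ensure that for each (of the countably many) saddle point $z_i$ the threshold $\alpha/L(z_i)$ is not of the form $\beta^j\delta_0$; then $\widehat\delta$ is locally \emph{constant} near $z_i$ and $H$ is genuinely $C^1$ there, which is what makes the Stable-Center Manifold theorem applicable. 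Without this observation the appeal to Shub's theorem is not licensed.

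Second, your claim that the set of initial points converging to a given saddle is a positive-codimension immersed submanifold, and hence automatically Lebesgue-null, overshoots. The Stable-Center Manifold theorem only gives a \emph{local} stable manifold $W^{s}_{loc}$; the global stable set is $\bigcup_{k\geq 0}H^{-k}(W^{s}_{loc})$, a countable union of preimages under a map $H$ that is not globally invertible. To conclude this union is Lebesgue-null, one needs that $H^{-1}$ of a null set is null. The paper obtains this from the local bounded-torsion estimate $d_X(H(x),H(y))\geq C\,d_X(x,y)$, which follows from the two-sided step-size bounds $\widehat\delta(x)<\alpha/L(x)$ and $\widehat\delta(x)\|grad f(x)\|<r(x)/2$ together with Lemma \ref{LemmaLowerBoundR} on the Strong local retraction. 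You should add this lemma to close the argument; as written, the final ``hence Lebesgue-null'' is unsupported.

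Aside from these two omissions, the convergence part (Armijo telescoping, lower bound on $\delta_n$ away from critical points, connectedness of the cluster set in $\mathbb{P}^N$, and the Morse argument) matches the paper's reasoning, and the concluding rate statement is correctly attributed to a local transplant of Armijo's estimate through the retraction.
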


\begin{theorem} Let $X$ be a Riemannian manifold of finite dimension, and $f:X\rightarrow \mathbb{R}$ a $C^3$ function which is Morse (that is, all its critical points are non-degenerate). For random choices of the hyperparametes in the New Q-Newton's method, if the sequence constructed by the algorithm converges, then the limit is a critical point. We have a Stable-Center manifold theorem, near saddle points of $f$, for the dynamical system associated to the algorithm. If the limit point is a non-degenerate minimum point, then the rate of convergence is quadratic. If moreover $X$ is an open subset of a Lie group and the initial point $x_0$ is chosen randomly, then we can globally avoid saddle points.  
\label{TheoremConsequence2}\end{theorem}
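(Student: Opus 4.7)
The plan is to reduce the Riemannian algorithm to the Euclidean New Q-Newton's method of \cite{truong-etal} via the \emph{local} retractions from Subsection \ref{SubsectionPreliminaries}. At each iterate $x_n$, one chooses a local retraction $R_{x_n}\colon T_{x_n}X \to X$ (satisfying $R_{x_n}(0)=x_n$ and $dR_{x_n}(0)=\mathrm{id}$) and applies one Euclidean New Q-Newton step to the pullback $\widehat f_n = f\circ R_{x_n}$ at the origin. Because $\mathrm{grad}\,\widehat f_n(0)=\mathrm{grad}\,f(x_n)$ and $\mathrm{Hess}\,\widehat f_n(0)=\mathrm{Hess}\,f(x_n)$, the hyperparameter-randomness analysis from \cite{truong-etal} transfers verbatim and guarantees, with probability one in the hyperparameter, that the modified Hessian $A_n$ is invertible with locally uniformly bounded inverse.

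For the convergence-to-critical-point conclusion, I would suppose $x_n\to x_\infty$ and work in a single retraction chart at $x_\infty$ for $n$ large. The condition $d(x_{n+1},x_n)\to 0$ forces $\|A_n^{-1}\,\mathrm{grad}\,f(x_n)\|\to 0$, and uniform boundedness of $A_n^{-1}$ near $x_\infty$ then yields $\mathrm{grad}\,f(x_\infty)=0$. Near a non-degenerate minimum the Hessian stays positive definite on a neighborhood, so $A_n=\mathrm{Hess}\,f(x_n)$ and in the chart $R_{x_\infty}$ the iteration is classical Newton for $\widehat f_\infty$; the standard quadratic-rate Taylor argument then goes through, using that a retraction agrees with $\exp_{x_n}$ to second order at $x_n$.

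The local Stable-Center manifold theorem reduces to a computation in the fixed chart $R_{x_\infty}$ at a saddle $x_\infty$: the iteration is a $C^1$ dynamical system on an open subset of $T_{x_\infty}X\cong\R^m$ with $x_\infty$ as fixed point, and its linearization is controlled by the spectrum of $\mathrm{Hess}\,f(x_\infty)$ exactly as in \cite{truong-etal}, so the classical center-stable manifold theorem applies and produces the desired local invariant manifold.

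The hard part, and the main obstacle, is upgrading this \emph{local} avoidance to the \emph{global} saddle-avoidance statement; this is where the Lie-group hypothesis enters. On a general $X$ the local retraction charts cannot be glued into a single global map, so the algorithm does not lift to a single global $C^1$ dynamical system. On a Lie group $G$ one overcomes this by left-translating a single fixed retraction at the identity (for instance the Lie-group exponential) to obtain a globally smooth family $\{R_g\}_{g\in G}$; the iteration then becomes a globally $C^1$ map $F\colon G\to G$. Covering $G$ by countably many relatively compact open sets, applying the local center-stable manifold theorem on each, and using a Fubini argument in the product of hyperparameter- and initial-point spaces then shows that the set of initial points converging to a saddle has measure zero, exactly along the lines of the Euclidean arguments of \cite{lee-simchowitz-jordan-recht, panageas-piliouras}.
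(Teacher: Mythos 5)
Your high-level plan — reducing to the Euclidean New Q-Newton's method of \cite{truong-etal} by working in local retraction charts, handling local Stable-Center manifolds in a fixed chart around $x_\infty$, and then using a global $C^1$ structure on a Lie group plus a Fubini argument for global saddle avoidance — does match the paper's approach in spirit. But there are two concrete gaps.

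First, you describe the step at $x_n$ as ``one Euclidean New Q-Newton step to the pullback $\widehat f_n = f\circ R_{x_n}$,'' omitting the damping factor $\lambda_n$. The Riemannian algorithm sets $x_{n+1}=R_{x_n}(-\lambda_n v_n)$ with $\lambda_n\sim r(x_n)/(2\|v_n\|)$ when $\|v_n\|$ is large; this is forced because the strong local retraction is only defined on $B(T_{x_n}X,r(x_n))$. Consequently, $d(x_{n+1},x_n)\to 0$ only yields $\|\lambda_n v_n\|\to 0$, \emph{not} $\|v_n\|=\|A_n^{-1}\mathrm{grad}\,f(x_n)\|\to 0$ as you assert. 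The paper flags exactly this as the ``subtle point'' in the proof of part 1) of Theorem \ref{Theorem4}: one must first rule out $\|v_n\|\to\infty$ (using that $\lambda_n\|v_n\|$ would then stay $\sim r(x_\infty)/2>0$, a contradiction) to conclude $\lambda_n$ is bounded below and hence $\|v_n\|\to 0$. Then one gets $\mathrm{grad}\,f(x_\infty)=0$ from $\mathrm{grad}\,f(x_n)=A_n w_n$ with $\|A_n\|$ bounded \emph{above} — note it is an upper bound on $A_n$, not on $A_n^{-1}$ as you write, that is needed here. Your argument also notes that one has to choose $\lambda_n$ with the growth $\sim 1/\|v_n\|$; a different truncation like $\sim 1/\|v_n\|^2$ would break part 1), and this choice should be made explicit.

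Second, for the global saddle avoidance, having a single globally $C^1$ dynamical system $F$ on the Lie group is necessary but not sufficient to run the Fubini argument of \cite{lee-simchowitz-jordan-recht, panageas-piliouras}: one needs $F$ to satisfy the Lusin $(N^{-1})$ property, i.e.\ preimages of null sets are null. For New Q-Newton type updates this is nontrivial, and the paper handles it in Theorem \ref{TheoremRandomnessLambda} parts 3)–4), where the \emph{Real analytic-like} condition on the retraction (which the Lie-group exponential satisfies, cf.\ Example \ref{Example4}) is used to show that for a.e.\ choice of the hyperparameters $\delta_0,\ldots,\delta_m$ the Jacobian of the update map is invertible off a null set. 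Without this step the Fubini argument you invoke does not close, and the randomness hypothesis on $\delta_0,\ldots,\delta_m$ (which the paper notes was overlooked in \cite{truong-etal}) plays no role in your sketch. A smaller point: $\mathrm{Hess}\,\widehat f_n(0)=\mathrm{Hess}\,f(x_n)$ holds only for second-order retractions or at critical points; the paper's algorithm uses the intrinsic Riemannian Hessian directly, so one should not identify the two in general.
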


The above  two theorems are special cases of Theorems \ref{Theorem2} and \ref{Theorem4}, respectively. The remaining of this paper is organised as follows. In the next subsection, we present very briefly some backgrounds in Riemannian geometry necessary for later use. In Subsection 0.3, we present the new algorithms. In relation to avoidance of saddle points for the Riemannian version of New Q-Newton's method, we introduce a property of (local) retractions called Real analyic-like. The latter property is satisfied for example by all open subsets of real Lie groups.  In the subsections after that, we present consecutively main results and proofs, as well as several experiments showing the advantage of using the Riemannian algorithms developed in this paper  even when one works on a Euclidean space - but with a singular cost function or with constraints. As an application, we propose a  general method using Riemannian Backtracking GD to find minimum of a function on a bounded ball in a Euclidean space, and do explicit calculations for calculating the smallest eigenvalue of a symmetric square matrix. We finish the paper with  some conclusions and ideas for future work. 

{\bf Remarks.} A heuristic argument for why Backtracking GD and New Q-Newton's method have good theoretical guarantee (as well as good practical performance) is that they have "correct units" (see \cite{truong-nguyen2}). There it was observed also that the Diminishing learning rate method does not have "correct units", while Standard GD for functions $f\in C^{1,1}_L$ with learning rates in the order of $1/L$ does have "correct units" (as mentioned above, in this case Standard GD is a special case of Backtracking GD).  The notion of "correct units" was defined by Zeiler in his Adadelta's paper \cite{Zeiler}, where he showed that Newton's method and Adadelta have "correct units", while Standard GD, Adam and PropRMS do not have "correct units". His recommendation was that one should use only methods which have "correct unit", and if a method does not have "correct unit" then one should modify it to another which has "correct units". From experimental results so far, it seems to us that Zeiler's recommendation does have a seed of truth in it.  

{\bf Remarks.} The definitions and results can be easily extended to modifications such as Unbounded Backtracking GD. However, to keep the paper succinct, in the remaining of the paper, we will treat only the original Backtracking GD. We also note that it takes considerably more work to extend New Q-Newton's method than to extend Backtracking GD methods.   

{\bf Acknowledgements.} This work is supported by Young Research Talents grant number 300814 from Research Council of Norway. 

\subsection{Preliminaries on Riemannian geometry}\label{SubsectionPreliminaries} This is just a very terse overview of backgrounds from Riemannian geometry needed for later use. We refer the interested readers to \cite{lee} (for generalities about Riemannian manifolds) and \cite{absil-etal, boumal} (for more details on how to use the tools for optimisation on Riemannian manifolds). 

A Riemannian manifold is a manifold $X$, together with a Riemannian metric $g(x)$ which is an inner product on tangent spaces $T_xX$. Usually we assume that $g(x)$ varies smoothly when $x$ changes. With the help of a Riemannian metric, and the associated Levi-Civita connection, given a function $f:X\rightarrow \mathbb{R}$ we can define the notions of gradient $grad(f)\in TX$ and Hessian $Hess(f)$ which reduce to the familiar notions $\nabla f$ and $\nabla ^2f$ when $X$ is a Euclidean space. 

A Riemannian metric gives rise to a metric $d_X$ on $X$ in the following manner. If $\gamma :[a,b]\rightarrow X$ is a smooth curve, then we define the length of $\gamma$ as:

\begin{eqnarray*}
L(\gamma )=\int _a^b||\gamma '(t)||dt.
\end{eqnarray*}
Here $||\gamma '(t)||$ is the length of the vector $\gamma '(t)$, with respect to the inner product given by the Riemannian metric $g$. 

Given $x,y\in X$, the distance $d_X(x,y)$ is the infimum  of the  lengths $L(\gamma )$, where $\gamma $ runs on all over curves on $X$ connecting $x$ and $y$. 

A geodesic is a curve which realises the distance. It satisfies a second order ODE. By results from ODE, uniqueness and local existence of geodesics are guaranteed. In particular, one can define exponential maps. If $x\in X$ and $v\in T_xX$ with $||v||$ small enough, then there is a unique geodesic $\gamma _v:[0,1]\rightarrow X$ such that $\gamma _v(0)=x$ and $\gamma _v '(0)=v$. The exponential map is $exp _x(v)=\gamma _v(1)$.  

For $x\in X$ and $r>0$, we denote by $B(T_xX,r)$ the set $\{v\in T_x:~||x||<r\}$. The injectivity radius, $inj(x)$, is the supremum of all $r$, for which the exponential map $exp_x$ is well-defined and a diffeomorphism from $B(T_x,r)$ onto its image. By the previous paragraph, we always have $inj (x)>0$. The injectivity radius can be infinity, for example in the case $X$ is a Euclidean space. We note the following important property of injectivity radius (see Proposition 10.18 in \cite{boumal}). 

\begin{proposition} The map $inj:~X\rightarrow (0,\infty ]$ is continuous. 
\label{PropositionInjectivityRadius}\end{proposition}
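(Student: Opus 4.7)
The plan is to prove continuity by establishing upper and lower semicontinuity separately, using the classical local identity
\begin{eqnarray*}
inj(x) \;=\; \min\bigl(c(x),\, \ell(x)/2\bigr),
\end{eqnarray*}
valid on any Riemannian manifold. Here $c(x)$ denotes the conjugate radius at $x$---the supremum of $r>0$ for which $d\exp_x$ is an isomorphism on the open ball $B(T_xX,r)$---and $\ell(x)$ denotes the infimum of lengths of nontrivial geodesic loops based at $x$ traced by geodesics in the domain of $\exp_x$, set to $+\infty$ if no such loop exists. Granted continuity of both $c$ and $\ell$ as $(0,\infty]$-valued functions, continuity of $inj$ will follow immediately from continuity of $\min$.

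First I would establish continuity of $c$. A tangent vector $v\in T_xX$ is conjugate precisely when $d\exp_x$ fails to be invertible at $v$, and since $\exp$ depends smoothly on the base point wherever it is defined, the set of pairs $(y,w)$ with $w$ nonconjugate is open in the domain of $\exp$. A compactness argument on closed sub-balls, together with the implicit function theorem applied at a first conjugate vector, then yields both upper and lower semicontinuity of $c$.

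The substantive step is continuity of $\ell$. Upper semicontinuity is the easier half: given a geodesic loop at $x$ of length just above $\ell(x)$, the endpoint map is a submersion in the initial conditions, so one can perturb it to produce a geodesic loop at any nearby $y$ of approximately the same length. For lower semicontinuity, suppose $x_n\to x$ and $\ell(x_n)\to r<\ell(x)$. A Whitehead-type lemma guarantees a compact neighborhood $K$ of $x$ and a number $r_0>0$ such that every $y\in K$ admits a totally normal neighborhood of radius $r_0$; this forces any nontrivial geodesic loop based in $K$ to have length at least $2r_0$, preventing the loops at $x_n$ from degenerating to constants. Their initial velocities then lie in a compact subset of $TX$, so one passes to a convergent subsequence and invokes continuous dependence of geodesics on initial data to obtain a geodesic loop at $x$ of length $r$, contradicting $\ell(x)>r$.

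I expect the main obstacle to be this last step, specifically ensuring that the minimizing loops at $x_n$ neither collapse to constants nor escape the domain of $\exp$ before closing up---both ruled out by the Whitehead neighborhoods together with the openness of the domain of $\exp$ as a subset of $TX$. The case $inj(x)=+\infty$ is handled separately by exhausting the fibers of $TX$ with compact balls and applying the same argument on each scale.
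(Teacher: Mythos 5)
Your strategy rests on the identity $inj(x)=\min\bigl(c(x),\,\ell(x)/2\bigr)$ together with continuity of $c$ and $\ell$. The identity itself is fine with your modified $c$ (which folds the domain of $\exp_x$ into the conjugate radius), though it is not quite the classical Klingenberg lemma, which is usually stated for complete manifolds; the pinching argument does go through using only local exponential charts, so no harm there. The fatal problem is the other half: $\ell$ is \emph{not} lower semicontinuous on non-complete manifolds, and non-completeness is exactly the case this proposition is needed for in the paper (global retractions may fail to exist). Take the flat cylinder $S^1\times\mathbb{R}$ of circumference $2\pi$, delete the single point $q=(0,0)$, and set $x=(\pi,0)$, $x_n=(\pi,1/n)$. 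Each $x_n$ carries the horizontal geodesic loop at height $1/n$, so $\ell(x_n)=2\pi$; the horizontal loop based at $x$ would pass through $q$, so $\ell(x)=+\infty$. Thus $\lim_n \ell(x_n)=2\pi<\ell(x)$, yet $c(x_n)=\sqrt{\pi^2+1/n^2}\to\pi=c(x)$ and $inj\equiv\pi$ near $x$: the proposition holds only because $c$ silently absorbs the downward jump in $\ell$, a cancellation invisible to any argument that treats the two pieces separately.

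This is precisely the obstacle you flagged, and the tool you offer to overcome it does not apply. Openness of the domain of $\exp$ in $TX$ lets you perturb a point already in the domain; it does nothing to prevent a sequence of points in the domain from converging to a point on its boundary, which is what happens here. The unit initial velocities of the minimizing loops at $x_n$ converge, and the geodesics converge on compact subintervals, but the limiting geodesic issued from $x$ is defined only on $[0,\pi)$ --- it hits $q$ --- so no loop at $x$ is produced and no contradiction arises. Whitehead-type uniformly totally-normal neighborhoods keep loops from becoming arbitrarily short; they do not keep them from running off the manifold. Any repair along these lines must couple $c$ and $\ell$ rather than prove their continuity independently. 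For the record, the paper offers no proof of its own here: it cites Lee's textbook for the complete case and a MathOverflow discussion (incorporated into Boumal's book) for the general case, so your attempt is a genuinely different route but, as written, it breaks on exactly the non-complete examples that motivate the proposition.
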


For a complete Riemannian manifold, a proof can be found in \cite[Proposition 10.37]{lee}. For a general Riemannian manifold, a proof is given by M. Stephen and J. Lee in an online discussion \cite{inj} and is incorporated into  \cite[Section 10.8]{boumal}. 

Exponential maps provide a way to move, on the same manifold, from a point $x$ in a chosen direction $v$. However, in optimisation, the most crucial property of an exponential map is that its derivative at $0\in T_xM$ is the identity map. (This implies, by inverse function theorem, that the exponential map is a local diffeomorphism near $0$.) This is generalised to the following (global) notion of "retraction", as given in \cite[Definition 4.1.1]{absil-etal} which we now recall. 

{\bf Definition (Global retraction).} A retraction on a Riemannian manifold $X$ is  a smooth mapping $R:TX\rightarrow X$ from the tangent bundle $TX$, with the following properties. If $R_x=R|_{T_xX}$ then: i) $R_x(0_x)=x$ where $0_x$ is the zero element of $T_xX$, and ii) $DR_x(0_x)=Id_{T_xX}$.  

On Euclidean spaces or complete Riemannian manifolds, global retractions exist. Some other interesting global retractions are given in \cite[Section 4.1]{absil-etal}. On the other hand, it is not clear if all Riemannian manifolds have at least one retraction in the above sense (we think that probably the answer is No). For example, by Hopf-Rinow theorem, the exponential map is defined on the whole tangent bundle if and only if $X$ is a complete metric space. In the current literature, all theoretical results are stated and proven under the existence of such global retractions. There are also a notion of "local retractions" \cite[Section 4.1.3 ]{absil-etal}, but we are not aware of any use of them in theoretical treatments, the reasons may be that they are not strong enough. In the next subsection, we will discuss a stronger version called  "strong local retractions", which exist on all Riemannian manifolds and which are strong enough to guarantee good theoretical properties.    

Next we discuss some estimates for Taylor's expansion of functions of the form $f(R_x(v))$ where $R_x:~B(T_xX,r(x))\rightarrow X$ a diffeomorphism onto its image such that $R_x(0)=x$ and $DR_x(0)=Id_{T_xX}$. The presentation here is taken from \cite[Section 10.4]{boumal}. We let $\widehat{f_x}=f\circ R_x:B(T_xX,r(x))\rightarrow \mathbb{R}$.  Suppose that 
\begin{equation}
||\nabla \widehat{f_x}(v)-\nabla \widehat{f_x}(w) ||\leq L||v-w||,
\label{Equation1}\end{equation}
for all $v,w\in B(T_xX,r)$, here $L>0$ is a positive constant. Then, 
\begin{equation}
|f(R_x(v))-f(x)-<v,grad f(x)>|\leq L||v||^2/2,
\label{Equation2}\end{equation}
 for all $v\in B(T_xX,r)$. This inequality is the generalisation of the usual property of functions on an open subset of a Euclidean space whose gradient is (locally) Lipschitz continuous. While Equation (\ref{Equation1})  can be complicated for general maps $R_x$ and functions $f$, there is one special case where it has the usual form. Indeed, if $f$ is $C^2$ and $R_x=exp_x$ is the exponential map and $r(x)=inj(x)$, then Equation (\ref{Equation1}) is satisfied for all $x\in X$ if and only if $||Hess (f)||\leq L$ on $X$.

A more general inequality is as follows, see \cite[Exercise 10.51]{boumal}. Let notations be as in the previous paragraph. Suppose that 
\begin{equation}
||\nabla \widehat{f_x}(v)-\nabla \widehat{f_x}(0) ||\leq L||v||,
\label{Equation1}\end{equation}
for all $v\in B(T_xX,r)$, here $L>0$ is a positive constant. Then, 
\begin{equation}
|f(R_x(v))-f(x)-<v,grad f(x)>|\leq L||v||^2/2,
\label{Equation2}\end{equation}
 for all $v\in B(T_xX,r)$. 
 
 The above inequalities can be extended globally by the following trick. If $\gamma :[0,1]\rightarrow X$ is a continuous curve, with $x=\gamma (0)$ and $y=\gamma (1)$, then we can find numbers $t_0=0<t_1<\ldots <t_{N-1}<t_N=1$, so that for all $j$, the point $x_j=\gamma (t_j)$ is in the range of the exponential map $exp_{x_{j-1}}$. Moreover $x_{j}=exp_{x_{j-1}}(v_{j-1})$ where $\sum ||v_j||\sim $ the length of $\gamma$. Hence, with the above local inequalities, one can prove global inequalities like $|f(x)-f(y)|\leq Ld_X(x,y)$, by using a telescope sum $\sum |f(x_j)-f(x_{j-1})|$. 

A diffeomorphism $\iota :X\rightarrow \iota (X)\subset \mathbb{R}^N$ is an isometric embedding if the lengths of vectors are preserved. This leads to preservation of many properties, including distances. We end this with Nash's embedding theorem \cite{kuiper, nash1, nash2}. 

\begin{theorem} Let $X$ be a Riemannian manifold of finite dimension. Then there is an isometric embedding $\iota: X\rightarrow \mathbb{R}^N $ for some big enough dimension $N$. 
\label{TheoremNashEmbedding}\end{theorem}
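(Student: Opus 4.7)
The plan is to establish Nash's theorem by combining a smooth topological embedding with an additional collection of coordinate functions that correct the metric, the latter produced by Nash's iteration scheme with smoothing. First I would use Whitney's embedding theorem to obtain a proper smooth embedding $e_0 \colon X \hookrightarrow \mathbb{R}^{N_0}$, and let $g_0 = e_0^* g_{\mathrm{Eucl}}$ be the induced Riemannian metric. After replacing $e_0$ by $\epsilon \cdot e_0$ for a sufficiently small positive function $\epsilon$ (so that $g_0$ is pointwise small compared to $g$), the defect $h := g - g_0$ becomes a smooth positive-definite symmetric $(0,2)$-tensor. The problem then reduces to finding a smooth map $u \colon X \to \mathbb{R}^M$ with $u^* g_{\mathrm{Eucl}} = \sum_{i=1}^M du_i \otimes du_i = h$; the combined map $\iota = (e_0, u) \colon X \to \mathbb{R}^{N_0 + M}$ is then an isometric embedding.

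Next, I would construct $u$ iteratively. Start with a coarse approximate solution $u^{(0)}$, obtained by writing $h = \sum_\alpha \lambda_\alpha \, d\phi_\alpha \otimes d\phi_\alpha$ via a partition of unity (the classical algebraic lemma for representing a positive tensor as a sum of squares of gradients). Define $u^{(k+1)} = u^{(k)} + \delta u^{(k)}$, where the correction $\delta u^{(k)}$ is chosen to kill, to first order, the current error $r^{(k)} := h - (u^{(k)})^* g_{\mathrm{Eucl}}$. The linearised equation is $2 \, \mathrm{sym}\bigl(du^{(k)} \otimes d(\delta u^{(k)})\bigr) = r^{(k)}$, an underdetermined algebraic system. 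Nash's key observation is that if one arranges for $u^{(k)}$ to be a \emph{free} map — so that its first and second partial derivatives span a full-rank subspace of the ambient space at each point — then this system can be solved pointwise, in closed form, by a bounded linear operator applied to $r^{(k)}$ and its first derivatives.

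The principal obstacle, and the analytical heart of the proof, is the loss of derivatives: the correction $\delta u^{(k)}$ involves derivatives of $r^{(k)}$, so naive iteration degrades smoothness and diverges in every fixed Hölder class. Nash overcomes this by inserting smoothing operators $S_{\theta_k}$ (for instance, convolution with mollifiers at scale $\theta_k \uparrow \infty$) into the iteration, replacing $r^{(k)}$ by $S_{\theta_k} r^{(k)}$ before inverting the linearisation. A delicate bookkeeping balances the smoothing scale $\theta_k$, the size of $r^{(k)}$ in several Hölder norms, and the regularity gain from each smoothing step; with the right choice of parameters the iteration converges geometrically in a fixed smooth Fréchet topology. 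This is the substance of the Nash--Moser hard implicit function theorem and is the only genuinely hard step.

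Finally, the non-compact case reduces to the compact one by exhausting $X = \bigcup_{j \ge 1} K_j$ with compact manifolds-with-boundary, applying the above construction on each $K_j$, and assembling the resulting embeddings by a smooth partition of unity. With care, the extra dimensions needed at each stage can be bounded in terms of $\dim X$ alone, so that a single finite $N$ suffices (indeed $N$ grows only linearly in $\dim X$ in the sharpest forms of the theorem). The topological embedding and the algebraic solution of the linearised equation are routine; as indicated, the genuine difficulty is entirely contained in the smoothing/Nash--Moser step.
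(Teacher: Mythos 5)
The paper does not prove this theorem; it is cited as a classical result, with references to Nash's two papers and to Kuiper's, and is used as a black box (in the proof of Theorem~\ref{TheoremConvergenceSequenceRiemannianManifolds} the only thing extracted from it is the metric inequality $d_X(x,x')\ge \norm{x-x'}$ for the isometrically embedded submanifold). So there is no ``paper's proof'' to compare against.

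Taken on its own terms, your sketch is a recognizable and essentially accurate high-level account of Nash's $C^\infty$ isometric embedding proof: Whitney embedding, scaling so the metric defect $h=g-g_0$ is positive, the reduction to $u^*g_{\mathrm{Eucl}}=h$, the free-map condition allowing the linearized equation to be solved algebraically (via the orthogonality trick $\partial_i u\cdot\delta u=0$, which you allude to), the loss of one derivative per step, and the insertion of smoothing operators to make the iteration converge (Nash--Moser). Two small caveats. First, the statement in the paper is regularity-agnostic and also cites Kuiper; the Nash--Kuiper $C^1$ theorem is proved by a completely different and far more elementary mechanism (successive corrugations/spirals increasing the induced metric, an instance of convex integration) that avoids Nash--Moser entirely, and for the paper's purposes a $C^1$ isometric embedding would already suffice. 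It is worth knowing that the ``easy'' route exists here. Second, your remark that the ambient dimension $N$ grows only linearly in $\dim X$ is correct for the Nash--Kuiper $C^1$ theorem (e.g.\ $N=2n+1$ via Whitney plus corrugation) but not for the $C^\infty$ theorem, where the best known bounds (Nash, later Günther) are quadratic in $n$. Neither caveat affects the soundness of the overall strategy you describe, but in a paper that only needs a metric comparison, invoking the cited theorem rather than re-proving it is the appropriate choice, and if one did want a self-contained argument the Kuiper route would be the lighter one.
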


\subsection{(Local-) Backtracking GD and New Q-Newton's method on Riemannian manifolds}\label{SubsectionAlgorithms} The bulk of this subsection is to extend the (Local-) Backtracking GD and New Q-Newton's method  to a general Riemannian manifold, in such a way that known results for optimisation on Euclidean spaces can be extended. 

In the previous subsection, we mentioned that it is not known if global retractions exist on every Riemannian manifolds. On the other hand, as mentioned, the versions of local retractions in the current literature are not strong enough to guarantee good theoretical properties of iterative optimisations. We will first introduce a version of local retractions, called strong local retraction, which is both existing on all Riemannian manifolds and strong enough to derive good theoretical guarantees. We recall that if $x\in X$ and $r>0$, then $B(T_xX,r)$ $=$ $\{v\in T_xX:~||v||<r\}$. 

{\bf Definition (Strong Local Retraction).}  A strong local retraction consists of a function $r:X\rightarrow (0,\infty ]$ and a map $R:\bigcup _{x\in X}B(T_xX,r(x))\rightarrow X$ with the following properties: 

1) $r$ is upper semicontinuous, that is $\limsup _{y\rightarrow x}r(y)\leq r(x)$.

2) $\bigcup _{x\in X}B(T_xX,r(x))$ is an open subset of $TM$, and $R$ is $C^1$ on its domain. 

3) If $R_x=R|_{B(T_x,r(x))}$, then $R_x$ is a diffeomorphism and $DR_x(0)=Id$. Moreover, we assume that $R$ is $C^1$.

\begin{example}
1) Because of the assumption that $\bigcup _{x\in X}B(T_xX,r(x))$ is an open subset of $TM$, it follows that for every compact set $K\subset X$ we have $\inf _{x\in K}r(x)>0$. 

2) If $R$ is a global retraction, then with the choice of $r(x)=\infty$ for all $x$, we have that $(r,R)$ is a Strong local retraction. 

3) If $r(x)=inj(x)$ and $R=$ the exponential map, then $(r,R)$ is a Strong local retraction, by Proposition \ref{PropositionInjectivityRadius}. Thus on every Riemannian manifold, there exists a Strong local retraction, where moreover we can assume that $r$ is a continuous function.  
\label{Example1}\end{example}

We have the following useful property on local Lipschitz continuity for Strong local retractions. 

\begin{lemma}
Let $r,R$ be a Strong local retraction on a Riemannian manifold $X$.   For each $x\in X$, there exists $s(x),h(x)>0$ such that the following property holds. For all $y,z\in R(B(T_xX,s(x)))$ and $v\in T_yX$, $w\in T_zX$ so that $||v||<s(x)-||R_x^{-1}(y)||$ and $||w||<s(x)-||R_x^{-1}(z)||$, then 
\begin{eqnarray*}
d_X(R_y(v),R_z(w))\geq \frac{1}{2}d_X(y,z)-h(x)||PT_{x\leftarrow y}v-PT_{x\leftarrow z}w||.
\end{eqnarray*}
Here $d_X$ is the induced metric on $X$, and $PT_{x\leftarrow y}$ is the parallel transport of vectors from $T_yX$ to $T_xX$ along the unique geodesic from $x$ to $y$ (when $y$ is close enough to $x$). 
\label{LemmaLowerBoundR}\end{lemma}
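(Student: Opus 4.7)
The plan is to pull the estimate back to the single tangent space $T_xX$ via the diffeomorphism $R_x$, where the conditions $R_x(0)=x$ and $DR_x(0)=\mathrm{Id}$ make a first-order Taylor analysis possible. Writing $y_u:=R_x(u)$, I would introduce the auxiliary map
\begin{equation*}
\Psi(u,\xi) := R_x^{-1}\bigl(R_{y_u}(PT_{y_u\leftarrow x}\xi)\bigr),
\end{equation*}
defined on an open neighborhood of $(0,0)$ in $T_xX\times T_xX$. Because $R$ is $C^1$, parallel transport along the short Riemannian geodesic from $x$ to $y_u$ depends smoothly on both the base point and the transported vector, and $R_x^{-1}$ is $C^1$ on the image of $R_x$; hence $\Psi$ is $C^1$ on its domain.

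I would then evaluate the two partial derivatives of $\Psi$ at $(0,0)$. Setting $\xi=0$ gives $\Psi(u,0)=R_x^{-1}(R_{y_u}(0))=R_x^{-1}(y_u)=u$, so $\partial_u\Psi(0,0)=\mathrm{Id}$. Setting $u=0$ gives $\Psi(0,\xi)=R_x^{-1}(R_x(\xi))=\xi$, so $\partial_\xi\Psi(0,0)=\mathrm{Id}$ as well. Fix a small $\varepsilon>0$. By continuity of $D\Psi$, one can choose $s>0$ so that $\|\partial_u\Psi-\mathrm{Id}\|<\varepsilon$ and $\|\partial_\xi\Psi-\mathrm{Id}\|<\varepsilon$ throughout $\{(u,\xi):\ |u|,|\xi|<s\}$. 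The mean value theorem along the straight segment joining $(u',\xi')$ to $(u,\xi)$ then gives
\begin{equation*}
\Psi(u,\xi)-\Psi(u',\xi') = (u-u') + (\xi-\xi') + E,\qquad |E|\leq \varepsilon\bigl(|u-u'|+|\xi-\xi'|\bigr),
\end{equation*}
and the reverse triangle inequality yields
\begin{equation*}
|\Psi(u,\xi)-\Psi(u',\xi')| \;\geq\; (1-\varepsilon)|u-u'| \;-\; (1+\varepsilon)|\xi-\xi'|.
\end{equation*}

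To conclude, I would convert these Euclidean norms on $T_xX$ into Riemannian distances on $X$. Since $DR_x(0)=\mathrm{Id}$, shrinking $s$ further makes $R_x$ a $(1\pm\varepsilon)$-bi-Lipschitz map from $(B(T_xX,s),|\cdot|)$ to $(R_x(B(T_xX,s)),d_X)$. Applied to the pair $y,z$ this yields $|u-u'|\geq (1+\varepsilon)^{-1}d_X(y,z)$ with $u:=R_x^{-1}(y)$, $u':=R_x^{-1}(z)$; applied to the pair $R_y(v),R_z(w)$ (whose $R_x$-coordinates are exactly $\Psi(u,\xi),\Psi(u',\xi')$, with $\xi:=PT_{x\leftarrow y}v$, $\xi':=PT_{x\leftarrow z}w$) it yields $d_X(R_y(v),R_z(w))\geq (1-\varepsilon)|\Psi(u,\xi)-\Psi(u',\xi')|$. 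Since parallel transport is an isometry of tangent spaces, $|\xi-\xi'|=\|PT_{x\leftarrow y}v-PT_{x\leftarrow z}w\|$. Chaining the three estimates and choosing $\varepsilon$ so small that $(1-\varepsilon)^2/(1+\varepsilon)\geq 1/2$, with $s(x):=s$ and $h(x):=(1-\varepsilon)(1+\varepsilon)$, delivers the claimed inequality. The admissibility conditions $\|v\|<s(x)-\|R_x^{-1}(y)\|$ and $\|w\|<s(x)-\|R_x^{-1}(z)\|$ are precisely what keep $R_y(v)$ and $R_z(w)$ inside $R_x(B(T_xX,s(x)))$, so that $\Psi$ is evaluated within its domain.

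The main obstacle I anticipate is the joint $C^1$ regularity of $\Psi$ in its two arguments, which rests on the smooth dependence of parallel transport on the base point as well as on the transported vector; beyond this soft analytic check, the remainder is a routine first-order Taylor bookkeeping together with the standard fact that a $C^1$ diffeomorphism whose derivative is the identity at a point is locally bi-Lipschitz with constants arbitrarily close to $1$.
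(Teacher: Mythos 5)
Your proof is correct and fills in, with full rigor, exactly the sketch the paper gives: the paper asserts the lemma is ``a consequence of the assumption that $R$ is $C^1$ \ldots\ and the assumption that $R_y(0)=y$ for all $y$, which the readers can ready work out on local coordinate charts,'' and your argument is precisely that working-out, pulling everything back via $R_x$, observing that the auxiliary map $\Psi$ has identity derivative in each slot at the origin, and extracting the estimate by first-order Taylor analysis. The only detail to watch is that $s(x)$ should be chosen small enough that (a) $r(\cdot)$ is bounded below by $s(x)$ on $R_x(B(T_xX,s(x)))$ so that $\Psi$ is in fact defined there, and (b) the $(1\pm\varepsilon)$-bi-Lipschitz property of $R_x$ with respect to $d_X$ (not just a fixed chart norm) holds, which uses that the Riemannian metric near $x$ is close to the inner product on $T_xX$ in $R_x$-coordinates---both are routine and do not affect the structure of the argument.
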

\begin{proof}
Indeed, this is a consequence of the assumption that $R$ is $C^1$ (and hence in particular is locally Lipschitz continuous, in both variables $x$ and $v\in B(T_xX,r(x))$) and the assumption that $R_y(0)=y$ for all $y$, which the readers can ready work out on local coordinate charts. 
\end{proof}

Now we are ready to define the versions of (Local-) Backtracking GD and New Q-Newton's method on a general Riemannian manifold.  

{\bf Definition (Riemannian Backtracking GD).} Let $X$ be a Riemannian manifold, and $(r,R)$ a Strong local retraction on $X$.  We choose $0<\alpha , \beta <1$ and $\delta _0>0$. Let $f:X\rightarrow \mathbb{R}$ be a $C^1$ function. For $x\in X$, we choose $\delta (x)$ to be the largest number $\delta $ in the set $\{\beta ^j\delta _0:~j=0,1,2,\ldots \}$ which satisfies both $\delta ||grad (f)(x)||< r(x)/2$ and Armijo's condition: 
\begin{eqnarray*}
f(R_x(-\delta grad (f)(x)))-f(x)\leq -\alpha \delta ||grad (f)(x)||^2. 
\end{eqnarray*}
(Since $R_x(0)=0$ and $DR_x(0)=Id$, there exists at least a positive number $\delta '$ for which Armijo's condition is satisfied. Hence, the function $\delta (x)$ is well-defined.)

The update rule for Riemannian Backtracking GD is as follows: We choose an initial point $x_0$, and construct a sequence $x_{n+1}=R_{x_n}(-\delta (x_n)grad (f)(x_n))$. 

\begin{example}
In the Euclidean space, this definition is classical, goes back at least to \cite{armijo}. In the Riemannian manifold setting, where the retraction $R$ is global, it is is known for awhile \cite{absil-etal, boumal}. In our setting of Strong local retractions here, it is a bit more complicated to state. 
\label{Example0}\end{example}

\begin{lemma}
Let the setting be as in the definition for the Riemannian Backtracking GD algorithm. Let $K\subset X$ be a compact set. If $\inf _{x\in K}||grad (f)(x)||>0$, then $\inf _{x\in K}\delta (x)>0$. 
\label{LemmaLowerBound}\end{lemma}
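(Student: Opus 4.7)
The plan is to extract uniform constants on the compact set $K$ and then find a single threshold $\delta_{\ast}>0$, independent of $x\in K$, below which both defining conditions for $\delta(x)$ (the radius constraint $\delta\|\operatorname{grad}(f)(x)\|<r(x)/2$ and Armijo's inequality) hold. Once such a $\delta_{\ast}$ exists, the lemma follows immediately: let $N$ be the least nonnegative integer with $\beta^N\delta_0\leq\delta_{\ast}$, and then $\delta(x)\geq\beta^N\delta_0>0$ for every $x\in K$.

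First, I would collect the easy uniform bounds. Since the domain $\bigcup_{x\in X}B(T_xX,r(x))$ is open in $TX$ and $K$ is compact, Example \ref{Example1}(1) gives $r_0:=\inf_{x\in K}r(x)>0$. Continuity of $\operatorname{grad}(f)$ and compactness of $K$ give $M:=\sup_{x\in K}\|\operatorname{grad}(f)(x)\|<\infty$, while the hypothesis provides $m:=\inf_{x\in K}\|\operatorname{grad}(f)(x)\|>0$.

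Next, I would use the $C^1$ smoothness of $R$ (and hence of $(x,v)\mapsto F(x,v):=f(R_x(v))$) together with compactness to obtain a \emph{uniform} Taylor estimate. The set $\Omega:=\{(x,v)\in TX : x\in K,\ \|v\|\leq r_0/2\}$ is compact and lies in the domain of $F$; pulling back through a finite cover of $K$ by local trivialisations of $TX$, the partial derivative $\partial_vF(x,v)=\nabla\widehat{f_x}(v)$ is continuous, and coincides with $\operatorname{grad}(f)(x)$ at $v=0$ since $DR_x(0)=\operatorname{Id}$. Joint uniform continuity on $\Omega$ yields, for any prescribed $\epsilon>0$, a number $\eta\in(0,r_0/2]$ such that
\begin{equation*}
\|\nabla\widehat{f_x}(v)-\operatorname{grad}(f)(x)\|\leq\epsilon\qquad\text{for all }x\in K,\ \|v\|\leq\eta.
\end{equation*}
Integrating $F(x,v)-F(x,0)=\int_0^1\langle\nabla\widehat{f_x}(tv),v\rangle\,dt$ then gives $|f(R_x(v))-f(x)-\langle\operatorname{grad}(f)(x),v\rangle|\leq\epsilon\|v\|$. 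Specialising to $v=-\delta\operatorname{grad}(f)(x)$ with $\delta M\leq\eta$, we obtain
\begin{equation*}
f(R_x(-\delta\operatorname{grad}(f)(x)))-f(x)\leq -\delta\|\operatorname{grad}(f)(x)\|\,\bigl(\|\operatorname{grad}(f)(x)\|-\epsilon\bigr).
\end{equation*}

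Finally, to force Armijo's condition I would choose $\epsilon:=(1-\alpha)m$; since $\|\operatorname{grad}(f)(x)\|\geq m$ on $K$, one has $\|\operatorname{grad}(f)(x)\|-\epsilon\geq\alpha\|\operatorname{grad}(f)(x)\|$, so the right-hand side above is bounded by $-\alpha\delta\|\operatorname{grad}(f)(x)\|^2$. Simultaneously, $\delta\|\operatorname{grad}(f)(x)\|\leq\delta M\leq\eta\leq r_0/2\leq r(x)/2$, so the radius constraint is also met. Thus $\delta_{\ast}:=\eta/M>0$ works uniformly on $K$, and the argument of the first paragraph concludes the proof. The main technical obstacle is the passage from pointwise to uniform continuity of $\nabla\widehat{f_x}(v)$ in the pair $(x,v)$; this is purely a matter of compactness once the domain is identified as an open subset of $TX$, but it is the one place where some care with local trivialisations of the tangent bundle is needed.
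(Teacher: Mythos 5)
Your proof is correct and follows exactly the route the paper indicates: the paper's own proof is a one-line reference to the Euclidean argument combined with observation~(1) of Example~\ref{Example1} (which gives $\inf_{x\in K}r(x)>0$ on compacts), and you have simply written out that Euclidean argument in full, carrying the extra radius condition $\delta\|\operatorname{grad}(f)(x)\|<r(x)/2$ along with Armijo's condition. One trivial point of care: to guarantee the \emph{strict} inequality in the radius constraint, take $\eta<r_0/2$ rather than $\eta\leq r_0/2$ (or simply observe that replacing $\delta_\ast$ by $\beta\delta_\ast$ handles the borderline case); this does not affect the conclusion $\inf_{x\in K}\delta(x)\geq\beta^N\delta_0>0$.
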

\begin{proof}
The proof is exactly as in the Euclidean setting, by using observation 1) in Example \ref{Example1}, which one can find for example in \cite{truong-nguyen}. 
\end{proof}

{\bf Definition (Riemannian Local - Backtracking GD)} Let $X$ be a Riemannian manifold, and $(r,R)$ a Strong local retraction on $X$, where it is assumed that $r$ is continuous.  Let $f:X\rightarrow \mathbb{R}$ be $C^1$. Put $\widehat{f_x}=f\circ R_x: B(T_xX,r(x))\rightarrow X$. We assume that there is a continuous function $L:X\rightarrow (0,\infty)$ such that for all $x\in X$ and all $v\in B(T_xX,r(x))$, the following inequality is satisfied: 
\begin{eqnarray*}
||\nabla \widehat{f_x}(v)-\nabla \widehat{f_x}(0)||\leq L(x)||v||. 
\end{eqnarray*}

Moreover, sssume that the conclusions of Lemma \ref{LemmaLowerBoundR} are satisfied with the choice $s(x)=r(x)$ and $h(x)=L(x)$.
 
Fix $0<\alpha ,\beta <1$ and $\delta _0$. For each $x\in X$, we define $\widehat{\delta}(x)$ to be the largest number $\delta $ among $\{\beta ^j\delta _0:~j=0,1,2,\ldots \}$ which satisfies the two conditions: 

\begin{eqnarray*}
\delta &<&\alpha /L(x),\\
\delta ||grad (f)(x)|| &<& r(x)/2.
\end{eqnarray*}

The update of Riemannian Local Backtracking GD is as follows. We choose an initial point $x_0\in X$, and construct the sequence $\{x_n\}$ as follows: 
\begin{eqnarray*}
x_{n+1}=R_{x_n}(-\widehat{\delta}(x_n)grad (f)(x_n)). 
\end{eqnarray*}

\begin{example} This definition was given in  \cite{truong, truong3} in the Euclidean and Hilbert and Banach spaces settings, where it is simpler to state.

(i) If $f$ is in $C^{1,1}_L$ (see \cite{absil-etal, boumal} for precise definition in the Riemannian setting, see also the previous subsection), then we can choose $L(x)=L$ for all $x$. 

(ii) If $f$ is $C^2$ and $R=$ the exponential map, then after  making $r(x)$ to be finite (for example, by replacing it with $\min \{r(x),1\}$), we can choose $L(x)$ to be any continuous function so that $L(x)\geq \sup _{z\in R_x(B(T_xX,r(x)))}||Hess(f)||$ for all $x\in X$. 

iii) More generally, if $f,R$ are $C^2$ functions, then since $\widehat{f_x}=f\circ R_x$ is $C^2$, we see that the conditions to apply Riemmanian Local-Backtracking GD are fully satisfied. 
\label{Example2}\end{example}

{\bf Definition (Riemannian New Q-Newton's method)} Let $X$ be a Riemannian manifold of dimension $m$ with a Strong local retraction $r,R$. Let $f:X\rightarrow \mathbb{R}$ be a $C^2$ function. We choose a real number $1<\alpha $ and randomly $m+1$ real numbers $\delta _0,\ldots , \delta _m$.  We also choose a strictly increasing sequence $\{\gamma _j\}_{j=0,1,2,\ldots } $, for which $\gamma _0=0$, $\gamma _1=1$, $\lim _{j\rightarrow\infty}\gamma _j=\infty$ and $\liminf _{j\rightarrow\infty}\gamma _j/\gamma _{j+1}>0$.  The update rule for Riemannian New Q-Newton's method is as follows. We choose an initial point $x_0$, and construct the sequence $\{x_n\}$ as follows: 

- If $grad(f)(x_n)=0$, then STOP. Otherwise,

- Choose $j$ to be the smallest number among $\{0,1,\ldots ,m\}$ so that $A_n=Hess(f)(x_n)+\delta _j||grad (f)(x_n)||^{\alpha}Id$ is invertible. 

- Let $V_{A_n}^+\subset T_{x_n}X$ be the vector space generated by eigenvectors with {\bf positive} eigenvalues of $A_n$, and $V_{A_n}^-\subset T_{x_n}X$ be the vector space generated by eigenvectors with {\bf negative} eigenvalues of $A_n$. Let $pr_{\pm,A_n}:T_{x_n}X\rightarrow V_{A_n}^{\pm}$ be the corresponding orthogonal projections.  

- Define $w_n$ by the formula $w_n=A_n^{-1}.grad(f)(x_n)$. 

- Let $v_n=pr_{+,A_n}.w_n-pr_{-,A_n}.w_n$. 

- Choose $j$ to be the smallest number so that $\gamma _j r(x_n)/2\leq v_n<\gamma _{j+1}r(x_n)/2$, then define  $\lambda _n=1/\gamma _{j+1}$, and $x_{n+1}=R_{x_n}(-\lambda _nv_n)$. (If $r(x)=\infty$, then we simply choose $\lambda _n=1$.)

\begin{example} This definition was given in  \cite{truong-etal} in the Euclidean setting, which makes precise some folklore heuristics in the Optimisation and Deep Learning communities (see for example \cite[Section 6]{dauphin-pascanu-gulcehre-cho-ganguli-bengjo} for a discussion). There, $\lambda _n=1$ because $r(x_n)=\infty$ for all $x_n$.   

In our definition here, if $||v_n||$ is small (relative to $r(x_n)$), then $\lambda _n=1$. 

The assumption that $\delta _0,\delta _1,\ldots, \delta _m$ are randomly chosen is only needed to make sure that the preimages of sets with {\bf zero} Lebesgue measure, by the dynamical systems associated to Riemannian New Q-Newton's method, are again of {\bf zero} Lebesgue measure (in the literature, this property is sometimes called Lusin ($N^{-1}$) property). This assumption was overlooked in \cite{truong-etal}. On the other hand, experiments in that paper show that the algorithm works well even if we do not choose  $\delta _0,\delta _1,\ldots, \delta _m$ randomly. (See Theorem \ref{TheoremRandomnessLambda} for more detail on the level of randomness required.)  It is possible that indeed Lusin ($N^{-1}$) property can hold for the dynamical systems in Riemannian New Q-Newton's method under much more general assumptions on $\delta _0,\ldots ,\delta _m$.         
\label{Example3}\end{example} 

Besides the discussion in Example \ref{Example3}, we need the following stronger assumption on the Strong local retraction $r,R$, in order to prove some theoretical results for Riemannian New Q-Newton's method. 

{\bf Definition (Real analytic-like Strong local retraction)} We assume that the Strong local retraction $(r,R)$ on $X$ has the following property. For every point $x_0\in X$, there is a small open neighbourhood $U$ of $x_0$ and a small open set $W\subset \mathbb{R}$ , so that if $u(x,s):~U\times W\rightarrow \bigcup _{x\in U}B(x,r(x))$ is continuous, as well as a $C^1$ map in the variable $x$ and a real rational function in the variable $s$, then for all $y\in U$, there is a real analytic function $h$  and a real rational function $\kappa$ such that $h\circ \kappa (s)$ restricts to $\det (grad _y(R_y(u(y,s))))$ on $W$. Here $\det (.)$ is the determinant of a matrix.

\begin{example} The use of this condition lies in that if $s\mapsto \det (grad _y(R_y(u(y,s))))$ ($s\in W$)  is not the zero function identically, then its zero set has Lebesgue measure $0$. 

One prototype example for this is when $X$ is an {\bf open subset} of a Euclidean space and $R$ is the exponential map (which is just the identity map in this case). In this case, we just need to take $h\circ \kappa (s)$ to be the rational function which defines the function $s\mapsto  \det (grad _y(R_y(u(y,s))))$ for $s\in W$. 

More generally, this is the case if the Strong local retraction $R(x,v)$ ($x\in X,v\in B(x,r(x))$) can be extended real analytically (need not be globally a diffeomorphism onto its image) to the whole $T_xX$. For example, this is the case when $X$ is an open subset of a  Lie group, and $R_x$ is the exponential map.   
\label{Example4}\end{example}

\subsection{Main results and proofs}\label{SubsectionMainResults} In this subsection we state and prove main results on convergence and/or avoidance of saddle points results for the algorithms defined in the previous subsection. In addition, we will also prove similar results for a continuous version of Backtracking GD, generalising Theorem 1.1 in \cite{truong}. We will start with some preparation results.  

The following is used to prove properties of the Riemannian Backtracking GD, Riemannian Continuous Backtracking GD as well as Riemannian Local Backtracking GD. We say that a sequence $\{x_n\}$, in a metric space $X$, diverges to infinity if it eventually leaves every compact subsets of $X$. We say that a point $x\in X$ is a cluster point of $\{x_n\}$ if there exists a subsequence $\{x_{n_j}\}$ converging to $x$.  

\begin{theorem} Let $X$ be a Riemannian manifold of finite dimension. Let $d_X$ be the induced metric on $X$. Let $\{x_n\}$ be a sequence in $X$ such that $\lim _{n\rightarrow\infty}d_X(x_{n+1},x_n)=0$. Let $C$ be the set of cluster points of $\{x_n\}$. Let $A$ be a closed subset of $X$. Assume that $C\subset A$. 
Let $B$ be a connected component of $A$, and assume further that $B$ is compact. 

1) Assume that $C\cap B\not=\emptyset$. Then $C\subset B$ and $C$ is connected. 

2) (Capture theorem) Assume that $B$ is a point and $C\cap B\not= \emptyset$. Then $C=B$, i.e. the sequence $\{x_n\}$ converges to the point $B$. 

3) Assume that $A$ is at most countable. Then either $\{x_n\}$ converges to a point, or $\{x_n\}$ diverges to infinity.  

\label{TheoremConvergenceSequenceRiemannianManifolds}\end{theorem}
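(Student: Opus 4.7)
The plan is to handle part (1) as the main technical step and to derive parts (2) and (3) from it with short extra arguments. For part (1), the first step is a topological separation lemma: since $B$ is a compact connected component of the closed set $A$ in the locally compact Hausdorff space $X$, there exist arbitrarily small open neighbourhoods $U$ of $B$ with $\overline{U}$ compact and $\partial U\cap A=\emptyset$. (One obtains this by starting from a relatively compact open set around $B$, cutting $A$ down to a clopen-in-$A$ neighbourhood of $B$ inside it, and then thickening slightly back into $X$; the compactness of $B$ is what makes the cutting-down possible.) Given such a $U$, the disjoint sets $\partial U$ (compact) and $A$ (closed) satisfy $2\eta := d_X(\partial U, A) > 0$. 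Suppose for contradiction that some $c\in C\setminus B$ exists, and choose $U$ small enough that $c\notin\overline{U}$. Since $C\cap B\neq\emptyset$, the sequence enters $U$ infinitely often, and since $c$ is a cluster point it also enters a fixed neighbourhood of $c$ disjoint from $\overline{U}$ infinitely often, producing infinitely many exit indices $n_k$ with $x_{n_k}\in \overline{U}$ and $x_{n_k+1}\notin \overline{U}$. By the hypothesis $d_X(x_{n+1},x_n)\to 0$, these $x_{n_k+1}$ lie eventually in the $\eta$-tube of $\partial U$, which is compact by local compactness. Hence $\{x_{n_k+1}\}$ has a cluster point, which must simultaneously lie in $C\subset A$ and in the $\eta$-tube of $\partial U$, which is disjoint from $A$ by choice of $\eta$. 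This contradiction gives $C\subset B$.

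For the connectedness assertion in (1), the plan is to run a parallel exit-point argument. Assuming $C=C_1\sqcup C_2$ with nonempty disjoint closed pieces, both $C_i$ are compact subsets of $B$, so one can enclose them in disjoint open $V_1,V_2$ with relatively compact closures and strictly positive mutual distance. The first-exit subsequence from $V_1$ lives eventually in a small compact tube around $\overline{V_1}$ and so has a cluster point in $C$; being outside $V_1$, it must lie in $V_2$ — but the tube can be chosen disjoint from $V_2$, which is the contradiction.

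Part (2) then follows almost for free: part (1) forces $C\subset B=\{p\}$ and $C$ connected, so $C=\{p\}$, and the only remaining step is to upgrade this unique-cluster-point statement to honest convergence $x_n\to p$. For that I would repeat the exit trick for the ball $B(p,\varepsilon/2)$: if some subsequence satisfied $d_X(x_{n_k},p)\geq\varepsilon$, it would either have a cluster point (necessarily $p$, contradicting the distance bound) or escape every compact set, in which case the first-exit points from $B(p,\varepsilon/2)$ would sit eventually in a compact annular region around $\partial B(p,\varepsilon/2)$ and so produce a cluster point of $\{x_n\}$ at distance $\geq\varepsilon/2$ from $p$, again contradicting $C=\{p\}$. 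Part (3) reduces to part (2) once one notes that any connected metric space with more than one point is uncountable (the distance from a fixed point would otherwise be a continuous countable-valued function on a connected set, forcing a separation), so every connected component of the countable set $A$ is a single point. Then $C=\emptyset$ yields the divergence-to-infinity alternative by the standard fact that a sequence with no cluster point in $X$ must leave every compact set, while $p\in C$ makes $\{p\}$ a compact connected component of $A$ meeting $C$, so part (2) yields $x_n\to p$.

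The hard part will be the separation lemma in (1): on a non-complete Riemannian manifold the $d_X$-balls need not be relatively compact, so producing $U$ with both $\overline{U}$ compact and $\partial U\cap A=\emptyset$ requires working in local charts around $B$ rather than relying on the global metric. Once this lemma is in place, everything else is a clean and repeated use of $d_X(x_{n+1},x_n)\to 0$ together with local compactness of $X$ — in particular the recurring fact that any compact subset of $X$ admits a compact $\eta$-tube for all sufficiently small $\eta$.
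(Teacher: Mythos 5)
Your proposal is correct but takes a genuinely different route from the paper's proof. The paper applies Nash's isometric embedding theorem to view $X$ as a submanifold of some $\mathbb{R}^N$, then uses the chain of inequalities $d_X(x,x')\geq \|x-x'\|\geq d_{\mathbb{P}^N}(x,x')$ to transport the hypothesis $d_X(x_{n+1},x_n)\to 0$ into the \emph{compact} metric space $\mathbb{P}^N$, where a cited theorem of Asi\'c--Adamovi\'c says that the cluster set of such a sequence is connected; all three parts are then read off from connectedness of the cluster set $C'$ in $\mathbb{P}^N$ together with the observation that $C'\cap\mathbb{R}^N=C$. Your argument is intrinsic: it replaces the embedding and the cited compactness result by a \v{S}ura--Bura-type separation lemma (every compact component $B$ of the closed set $A$ admits arbitrarily small relatively compact open neighbourhoods $U$ with $\partial U\cap A=\emptyset$, obtained by applying the component-equals-quasi-component fact inside $A\cap\overline{W}$ for a relatively compact open $W\supset B$ and then thickening) plus a repeated exit-point argument: if the sequence enters and leaves $U$ infinitely often, the exit pairs $x_{n_k}\in\overline{U}$, $x_{n_k+1}\notin\overline{U}$ produce, via $d_X(x_{n_k},x_{n_k+1})\to 0$ and compactness of $\overline{U}$, a cluster point on $\partial\overline{U}\subset\partial U$, hence in $A\cap\partial U=\emptyset$. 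This is more elementary and more self-contained: it avoids Nash's theorem (which is a heavy analytic input for what is really a topological statement) and works in any locally compact metric space, whereas the paper's reduction is specific to Riemannian manifolds. What the paper's route buys is brevity — it offloads the combinatorics to a single citation — and a reusable compactification technique that the author exploits elsewhere. Two small points to watch in your write-up: in the separation lemma you should apply the clopen-extraction inside the compact set $A\cap\overline{W}$ (not inside $A$ itself, which need not be compact) and then observe that the resulting clopen piece $F$ is automatically clopen in $A$ because it is contained in the open set $W$; and in part (2) the ball $B(p,\varepsilon/2)$ need not have compact closure on a non-complete manifold, so you should either restrict to $\varepsilon$ small relative to the completeness radius at $p$, or simply reuse the already-constructed neighbourhood $U$ from the separation lemma, which gives the contradiction more cleanly via $z\in\partial U$ versus $z=p\in U$.
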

\begin{proof}
The idea is to apply Nash's embedding theorem to the arguments in \cite{truong-nguyen}.

By Theorem \ref{TheoremNashEmbedding}, for the purpose of comparing metrics, we can assume that $X$ is a Riemannian submanifold of some Euclidean space $\mathbb{R}^N$. We let $||.||$ denote the usual norm on $\mathbb{R}^N$. We let $\mathbb{P}^N$ be the real projective space of dimension $N$, and $d(.,.)$ the standard spherical metric.

Then for $x,x'\in X$, we have the following inequalities: $d_X(x,x')\geq ||x-x'||\geq d(x,x')$. The first inequality follows since $X$ is a Riemannian submanifold of $\mathbb{R}^N$.  The second inequality is probably well known, and a detailed proof is given in \cite{truong-nguyen}.  Hence, we also have $\lim _{n\rightarrow\infty}d(x_{n+1},x_n)=0$. Note that while the metric $d_X$ may be very different from that of the restriction of $d(.,.)$ to $X$ (in particular, since $d(.,.)$ is bounded, while $d_X$ may not), $X$ is a {\bf topological} subspace of $\mathbb{P}^N$. In particular,  convergence behaviour of a sequence $\{x_n\}\subset X$, considered in either the original topology on $X$ or the induced one from $\mathbb{P}^N$, is the same. 

We let $C'\subset \mathbb{P}^N$ be the set of cluster points of $\{x_n\}$, considered as a sequence in $\mathbb{P}^N$. Then $C'$ is the closure of $C$ in $\mathbb{P}^N$, and $C'\cap \mathbb{R}^N=C\subset A$. Since $\mathbb{P}^N$ is a compact metric space, it follows from \cite{asic-adamovic} that $C'$ is a connected set.  

 1) If $C'\cap B=C\cap B$ is not empty, then since $C'\cap \mathbb{R}^N\subset A$, $C'$ is connected and $B$ is a compact connected component of $A$, it follows that $C'\subset B$. Hence, $C'=C'\cap B=C\cap B=C$ is connected. 
 
 2) From 1) we have that $C\subset B$. If $B$ is  a point, then we must have $C=B$, which means that $\{x_n\}$ converges to the point $B$. 
 
 3) Since $A$ is at most countable, any connected component of $A$ is 1 point, and hence must be compact. If $C\not= \emptyset$, then $C$ must intersect at least one of the points in $A$, and hence by part 2) the whole sequence $\{x_n\}$ must converge to that point. Otherwise, $C=\emptyset$, and hence in this case $\{x_n\}$ diverges to infinity by definition.

\end{proof}

The following is used to prove properties of the Riemannian New Q-Newton's method algorithm.  

\begin{theorem} Let $X$ be a Riemannian manifold. Let $f:X\rightarrow \mathbb{R}$ be a $C^2$ function. Define $U=X\backslash \{x\in X:~grad(f)(x)=0\}$. For $x\in U$ and $\delta \in \mathbb{R}$ we define $A(x,\delta )=Hess(f)(x)+\delta ||grad (f)(x)||^{\alpha} Id$.

1) There exists a set $\Delta \subset \mathbb{R}$ of Lebesgue measure $0$ such that:  For all $\delta \in \mathbb{R}\backslash \Delta$, the set $\{x\in U:~A(x,\delta )$ is not invertible $\}$ has Lebesgue measure $0$. 

2) Fix $\delta \in \mathbb{R}\backslash \Delta$. We define $w(x,\delta )=A(x,\delta )^{-1}.grad (f)(x)$ and $v(x,\delta )=pr_{+,A(x,\delta )}.w(x,\delta )-pr_{-,A(x,\delta )}w(x,\delta )$, and $\lambda (x,\delta )=r(x)/||v(x,\delta )||$. Let $U_{\delta}=U\backslash \{x\in U:~p(x,\delta )=0\}$.  Then $v(x,\delta ),\lambda (x,\delta )$ are continuous on $U_{\delta}$. If we assume moreover that $f$ is $C^3$, then $v(x,\delta )$ is $C^1$ on $U_{\delta}$, and $\lambda (x,\delta )$ is  $C^1$ on $U_{\delta}\backslash \{x:~r(x)=||v(x,\delta )||\}$. 

3) Let the setting be as in part 2). Moreover, assume that $R$ is real analytic - like. Define $H(x,\delta )=R_x(-\lambda (x,\delta )v(x,\delta ))$, here $\lambda (x,s)$ is as in the definition for Riemannian New Q-Newton's method. There is a set $\Delta '\subset \mathbb{R}\backslash \Delta $ of Lebesgue measure $0$ so that for all $\delta \in \mathbb{R}\backslash (\Delta \cup \Delta ')$, the set $\{x\in U_{\delta }:~r(x)\not= ||v(x,\delta )||,~grad(H)(x,\delta )$ is not invertible$\}$ has Lebesgue measure $0$. 

4) Let the setting be as in part 3). There is a set $\Delta "\subset \mathbb{R}$ of Lebesgue's measure zero so that the following is satisfied. Let $\mathcal{E}\subset X$ be a set of Lebesgue measure $0$. If $\delta \in \mathbb{R}\backslash \Delta "$, then $H(.,\delta )^{-1}(\mathcal{E})\subset X$ has Lebesgue's measure $0$.

\label{TheoremRandomnessLambda}\end{theorem}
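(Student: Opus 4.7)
The plan is to prove the four parts in sequence; each reduces a measure-theoretic statement to a Fubini argument on a suitable product set, together with a structural observation about how $A$, $v$, and $\lambda$ depend on $\delta$.

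For Part 1, work in local coordinates on $U$. Then $\delta \mapsto \det A(x,\delta)$ is a polynomial in $\delta$ of degree $m$ whose leading coefficient is $\|grad(f)(x)\|^{m\alpha}>0$ on $U$. Hence for each $x \in U$ the set $\{\delta:\det A(x,\delta)=0\}$ is finite, so Fubini's theorem applied to $\{(x,\delta)\in U\times \mathbb{R}:\det A(x,\delta)=0\}$ shows this set is Lebesgue null in $U\times \mathbb{R}$; applying Fubini once more in the $\delta$-direction produces the desired null set $\Delta \subset \mathbb{R}$.

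For Part 2, fix $\delta \notin \Delta$. On the open set $U_\delta$ the operator $A(x,\delta)$ is invertible with spectrum locally bounded away from $0$, so the Riesz projection $pr_{+,A(x,\delta)} = (2\pi i)^{-1} \oint_{\Gamma_+}(zI-A(x,\delta))^{-1}\, dz$, with $\Gamma_+$ a small contour enclosing the positive spectrum and avoiding $0$, depends continuously on $x$, and $C^1$-smoothly when $f\in C^3$ (so $Hess(f)$ is $C^1$ and inversion is smooth). Consequently $w=A^{-1}grad(f)$ and $v=pr_{+,A}w-pr_{-,A}w$ inherit the claimed regularity; since $grad(f)\neq 0$ and $A$ is invertible, $v$ is nowhere zero on $U_\delta$, so the corresponding regularity of $\lambda=r(x)/\|v(x,\delta)\|$ follows from that of $r$ and $v$, away from the locus at which the branch of $\lambda$ switches, i.e.\ off $\{r(x)=\|v(x,\delta)\|\}$.

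For Part 3, fix $x \in U_\delta$ and study $\phi_x(\delta) := \det grad_y R_y(-\lambda(y,\delta)v(y,\delta))|_{y=x}$. The key observation is that the eigenvectors of $A(x,\delta)=Hess(f)(x)+\delta\|grad(f)(x)\|^\alpha Id$ coincide with those of $Hess(f)(x)$ and are thus independent of $\delta$; only the eigenvalues shift by $\delta\|grad(f)(x)\|^\alpha$. On each maximal open $\delta$-interval over which the sign pattern of the shifted eigenvalues is constant, $v(x,\delta)$ is a rational vector-valued function of $\delta$ in the fixed eigenbasis. After a rational reparametrisation absorbing the $1/\|v\|$ factor (by rationally parametrising one branch of the algebraic unit-direction curve $\delta \mapsto v/\|v\|$), the map $-\lambda v$ acquires the form $u(y,s)$ required by the real-analytic-like assumption on $R$, which then yields $\phi_x(\delta)=h(\kappa(\delta))$ for some real analytic $h$ and rational $\kappa$. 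To rule out identical vanishing, pass to the limit $\delta \to +\infty$: there $A$ is positive definite, $v=w\to 0$, and $-\lambda v\to -r(x)\,grad(f)(x)/\|grad(f)(x)\|$, and a direct computation using $R_y(0)=y$ and $DR_x(0)=Id$ shows the limit Jacobian is generically invertible. Thus $\phi_x$ is not identically zero, so its zero set has Lebesgue measure zero; Fubini in $(x,\delta)$ then produces the null set $\Delta'$.

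For Part 4, set $\Delta''=\Delta\cup\Delta'$ and fix $\delta \notin \Delta''$. By Part 3 the singular set $S_\delta=\{x \in U_\delta:grad_x H(x,\delta)\text{ is not invertible}\}$ has measure zero, and on $U_\delta\setminus S_\delta$ the map $H(\cdot,\delta)$ is $C^1$ with invertible Jacobian, hence by the inverse function theorem locally a $C^1$ diffeomorphism; covering by countably many such charts and using that $C^1$ diffeomorphisms preserve the class of Lebesgue-null sets gives $H(\cdot,\delta)^{-1}(\mathcal{E})\cap (U_\delta\setminus S_\delta)$ null, while $H(\cdot,\delta)^{-1}(\mathcal{E})\cap S_\delta\subset S_\delta$ is automatically null. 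The principal obstacle will be Part 3: showing that $\delta\mapsto -\lambda(x,\delta)v(x,\delta)$ genuinely fits the real-analytic-like template despite the square root implicit in $\|v\|$ (which forces one to parametrise rationally the relevant algebraic branch of the unit-direction curve), and rigorously verifying non-degeneracy of the limiting Jacobian so that $\phi_x$ is not identically zero.
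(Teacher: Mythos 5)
Parts (1), (2) and (4) of your argument match the paper's essentially verbatim, and your observation that the eigenvectors of $A(x,\delta)$ do not move with $\delta$ (only the eigenvalues are shifted by $\delta\|grad(f)(x)\|^\alpha$) gives a slightly cleaner route to the rationality of $\delta\mapsto v(x,\delta)$ than the paper's Cauchy-integral/perturbation computation. That is a genuine, valid simplification.

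Part (3), however, has a real gap, and it stems from a misreading of which $\lambda$ enters the map $H$. The theorem statement says explicitly that in part (3) ``$\lambda(x,s)$ is as in the definition for Riemannian New Q-Newton's method,'' i.e.\ $\lambda$ is the \emph{piecewise constant} quantity $1/\gamma_{j+1}$ chosen so that $\lambda\|v\|< r(x)/2$; it is \emph{not} the continuous $\lambda(x,\delta)=r(x)/\|v(x,\delta)\|$ from part (2). You carry the continuous $\lambda$ through, and this breaks the proof in two ways. First, $-\lambda v = -r(x)\,v/\|v\|$ then involves $\|v\|=\sqrt{\text{rational}}$, which is \emph{not} a rational function of $\delta$; the Real-analytic-like hypothesis on $R$ is formulated only for $u(x,s)$ rational in $s$, and you yourself flag the square root as ``the principal obstacle'' — this is not something you can absorb by a rational reparametrisation of the unit-direction curve in any way your sketch justifies. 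Second, your non-degeneracy check at $\delta\to+\infty$ is wrong: with the continuous $\lambda$, although $v\to 0$, the vector $-\lambda v$ converges to $-r(x)\,grad(f)(x)/\|grad(f)(x)\|$, a vector of norm exactly $r(x)$ sitting on the boundary of the domain ball of $R_x$. There the facts $R_x(0)=x$ and $DR_x(0)=Id$ tell you nothing about the Jacobian, and in general that Jacobian is not invertible. The paper's argument, using the piecewise constant $\lambda$, works precisely because then $-\lambda v\to 0$ as $\delta\to\infty$, hence the Jacobian of $x\mapsto R_x(-\lambda v(x,\delta))$ tends to $Id$ (a short chain-rule computation using $\partial_1 R(x,0)=Id$, $\partial_2 R(x,0)=DR_x(0)=Id$, and $grad_x v\to 0$), which shows $h\circ\kappa$ is not identically zero. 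With the correct (piecewise constant) $\lambda$, the square-root obstruction disappears as well, since on a fixed $\gamma$-bin $\lambda$ is constant and $-\lambda v$ is literally rational in $\delta$. So the repair is: replace $\lambda=r/\|v\|$ throughout part (3) by the algorithm's piecewise constant $\lambda$, then both your rationality and your non-degeneracy claims go through by exactly the argument you sketched.
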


\begin{proof}
Because we are working locally, we can assume that $X$ is a Euclidean space and $R_x(v)=x+v$ (this is the first order approximation of $R$, which is the only thing we need when checking that the gradient of the dynamics is invertible). 

1) Define $p(x,\delta )=\det (A(x,\delta ))$.  For $x\in U$, then $p(x,\delta )$ is a polynomial in $\delta$ of degree exactly $m=\dim (X)$. Note that $A(x,\delta )$ is not invertible if and only if $p(x,\delta )\not= 0$. We consider the set $\Gamma =\{(x,\delta )\in U\times \mathbb{R}:~p(x,\delta )=0\}$. Then $\Gamma $ is a measurable subset of $U\times \mathbb{R}$, and we need to show the existence of a set $\Delta \subset \mathbb{R}$ of Lebesgue measure $0$ so that for all $\delta \in \mathbb{R}\backslash \Delta$, the set $\Gamma _{\delta}=\{x\in U:~p(x,\delta )=0\}$ has Lebesgue measure $0$. 

First, we will show that $\Gamma$ has Lebesgue measure $0$. Let $1_{\Gamma}:~U\times \mathbb{R}\rightarrow \{0,1\}$ be the characteristic function of $\Gamma$.  For a set $A$, we denote by $|A|$ its Lebesgue measure. Then, by Fubini-Tonneli's theorem in integration theory for non-negative functions, one obtains: 
\begin{eqnarray*}
|\Gamma |=\int _{U\times \mathbb{R}}1_{\Gamma}(x,\delta ) d(x,\delta ) =\int _{U}(\int _{\mathbb{R} }1_{\Gamma}(x,\delta )d \delta )dx=0.
\end{eqnarray*}
This is because for each $x\in U$, the set $\{\delta \in \mathbb{R}:~p(x,\delta )=0\}\subset \mathbb{R}$ has cardinality at most $m$, and hence has Lebesgue measure $0$. 

Applying Fubini-Tonneli's theorem again, but now doing the integration on $U$ first, one obtains that for a.e. $\delta \in \mathbb{R}$, the set $\Gamma _{\delta}=\{x\in U:~p(x,\delta )=0\}\subset U$ has Lebesgue measure $0$.  

2) The claims follow from perturbation theory for linear operators \cite{kato}, where $pr_{\pm ,A(x,\delta )}$ can be represented via Cauchy's integrations on the complex plane containing the variable $\delta$. The readers can see for example \cite{truong-etal} on details how the arguments go. 

3)  For each $x\in U$, define $\Delta _x=\{\delta \in \mathbb{R}:~p(x,\delta )=0\}$. Then, $\Delta _x$ is a finite set (zeros to the polynomial $p(x,\delta )$).

By calculating, we see that when it is legit (c.f. part 2 above), then $grad(H(.,\delta ))$ is invertible at $x$ iff $q(x,\delta )=det (grad (H(.,\delta )))\not= 0$. 

Now, we fix $x_0\in U$ and $\lambda _0\in \Delta _{x_0}$. We note that there is an open interval $(a,b)\subset \mathbb{R}$ containing $\delta _0$, on which $grad (H(x_0,\delta ))$  is legit and $-\lambda (x,\delta )v(x,\delta )$ is a rational function of $\delta$. Here is how to see this. We can write $\delta =\delta _0+\epsilon$, where $\epsilon$ is a small real number. Then, for $x$ close to $x_0$ and $\delta$ close to $\delta _0$, the eigenvalues of $A(x,\delta )$ are close to those of $A(x_0,\delta _0)$. Therefore, we can choose two close curves $\gamma ^+$ and $\gamma ^-$ in the complex plane containing the variable $\epsilon$, so that for $\delta =\delta _0+\epsilon$ close to $\delta _0$
\begin{eqnarray*}
v(x,\delta _0+\epsilon )&=& \frac{1}{2\pi i}\int _{\gamma ^+} ((A(x,\delta _0)+\epsilon ||grad(f)(x) ||^{\alpha})^{-1}-\zeta )^{-1}d\zeta \\
&&-\frac{1}{2\pi i}\int _{\gamma ^-} ((A(x,\delta _0)+\epsilon ||grad(f)(x) ||^{\alpha})^{-1}-\zeta )^{-1}d\zeta .
\end{eqnarray*}
 
Since we assumed that $||v(x_0,\delta _0)||\not= r(x_0)$, it follows that $||v(x,\delta _0+\epsilon )||\not= r(x)$ under the current assumptions. So we have that $\lambda (x,\delta _0+\epsilon )$ has the same form in the considered domain. Thus we can assume that $\lambda (x,\delta _0+\epsilon )=\gamma _{j+1}$ for a constant $j$. Moreover, the above formula for $v(x,\delta _0+\epsilon )$ clearly shows that it is a rational function on $\epsilon$, of degree exactly $m$. 

Hence, by the assumption that $R$ is real analytic-like, it follows that (see the discussion in Example \ref{Example4}), either $q(x_0,\delta _0+\epsilon )$ is zero identically, or its set of zero has Lebesgue's measure zero. We will show that the former case cannot happen. Indeed, while we defined the function 
\begin{eqnarray*}
\epsilon \mapsto \det (grad (R(-\lambda _jv(x,\delta _0+\epsilon ))))
\end{eqnarray*}
only for $\epsilon $ small numbers, it is no problem to extend its domain of definition to the maximum domain  $W\subset \mathbb{R}$ for which $-\lambda _jv(x,\delta _0+\epsilon )\in B(x,r(x)) $. The same function $h\circ \kappa (s)$ will work for the whole domain $W$. 

Now, we note that for $\epsilon $ large enough (uniformly in $x$), then $-\lambda _{j+1}v(x,\delta _0+\epsilon \in B(x,r(x))$. It can be seen from the above integral representation for $v(x,s)$. Here is another way, easier to see, way. We check this claim for example at the point $x_0$. Let  $\zeta _1,\ldots ,\zeta _l >0$ and $\zeta _{l+1},\ldots ,\zeta _{m}<0$ be all eigenvalues of $A(x_0,\delta _0)$. Let also $e_1,\ldots ,e_l$ and $e_{l+1},\ldots ,e_m$ be the corresponding orthonormal basis. If $grad(f)(x_0)=a_1e_1+\ldots +a_me_m$, then 
\begin{eqnarray*}
v(x_0,\delta _0+\epsilon )=\sum _{\beta <l+1}\frac{a_{\beta}}{\zeta _{\beta } +\epsilon ||grad(f)(x_0)||^{\alpha }}e_{\beta}- \sum _{\beta >l}\frac{a_{\beta}}{\zeta _{\beta } +\epsilon ||grad(f)(x_0)||^{\alpha }}e_{\beta}. 
\end{eqnarray*}
Hence $\lim _{\epsilon\rightarrow\infty}-\lambda _{j+1}v(x_0,\delta _0+\epsilon )=0$, which confirms the claim.

Now, we are ready to show that $q(x_0,\delta _0+\epsilon )$ is not zero identically, for $\epsilon $ small enough. Indeed, since $h\circ \kappa (\delta _0+\epsilon )$ restricts to $q(x_0,\delta _0+\epsilon )$, if $q(x_0,\delta _0+\epsilon )$ is zero identically, then the same would be true for $h\circ \kappa (\delta _0+\epsilon )$. However,  we will show that this is not the case. For $\epsilon $ large enough, from what written, we get that: $h\circ \kappa (\delta _0+\epsilon )=grad _x(R_x(-\lambda _{j+1}v(x,\delta _0+\epsilon )))$. On the other hand, by Chain's rule: 
\begin{eqnarray*}
grad _x(R_x(-\lambda _{j+1}v(x,\delta _0+\epsilon )))=(grad _xR_x|_{-\lambda _{j+1}v(x,\delta _0+\epsilon ))}).grad _x(-\lambda _{j+1}v(x,\delta _0+\epsilon ))).
\end{eqnarray*}
By explicit calculation, one then sees that $\lim _{\epsilon\rightarrow\infty}grad _x(R_x(-\lambda _{j+1}v(x,\delta _0+\epsilon )))=Id$. (One can check this readily for the Euclidean case, in which case $H(x,\lambda )=x-\lambda _{j+1}v(x,\delta  )$, using either representation of $v(x,\delta )$ above.) Hence, in particular $\lim _{s\rightarrow \infty}h\circ \kappa (\delta _0+\epsilon )=1$, which shows that $h\circ \kappa (\delta _0+\epsilon )\not= 0$, as desired. 

From the representation of $v(x_0,\delta _0+\epsilon )$, we see that $$||v(x,s)||^2=\sum _{\beta }\frac{a_{\beta }^2}{(\zeta _{\beta }+\epsilon ||grad (f)(x_0)||^{\alpha})^2},$$ and hence clearly is not a constant. (For example, one can see that it is a rational function with at least one pole, but is not identically $\infty$, because the limit when $\epsilon$ goes to $\infty$ is $0$.) Therefore, the set of $\delta \in \mathbb{R}$ for which $||v(x,s)||=r(x)$ is at most a finite set. 

Combining all the above, here is a summary of what we proved so far: Except for a countable set of values $\delta \in \mathbb{R}$, the map $x\mapsto R_x(-\delta (x,\delta )v(x,\delta ))$ is $C^1$ and is a local diffeomorphism.  With this, now we finish the proof of this part 3). As in the proof of part 1), we define the set $\Gamma '$ to consist of all pairs $(x,\delta )$ for which $A(x,\delta )$ is invertible but the gradient of $H(x,\delta )$ is not invertible. The slices $\Gamma '_{x}=\{\delta :~(x,\delta )\in \Gamma '\}$ have been shown above to have Lebesgue's measure zero. Hence, by the use of Fubini-Tonelli's theorem, we get that the slices $\Gamma '_{\delta}=\{x:~(x,\delta )\in \Gamma '\}$ also have Lebesgue's measure zero for $\delta \in \mathbb{R}$ outside of a set of Lebesgue's measure $0$. 

 4) This is a consequence of part 3 and the following well-known fact: 
 
 {\bf Fact.} Let $H:U\rightarrow V$ be a map, where $U,V$ are open subsets in $\mathbb{R}^m$. Assume that there is a set $\mathcal{E}'\subset U$ of Lebesgue's measure $0$ so that for all $x\in U\backslash \mathcal{E}'$ then $H$ is $C^1$ near $x$ and $grad (H)$ is invertible there. Then, if $\mathcal{E}\subset V$ has Lebesgue's measure $0$, it follows that $H^{-1}(\mathcal{E})\subset U$ has Lebesgue's measure $0$.  

\end{proof}

Now we are ready to state and prove convergence and/or avoidance of saddle points for the mentioned algorithms. The first result is for Riemannian Backtracking GD. 

\begin{theorem} Let $X$ be a Riemannian manifold and $r,R$ a Strong local retraction on $X$. Let $f:X\rightarrow \mathbb{R}$ be a $C^1$ function. Let $x_0\in X$, and $\{x_n\}$ the sequence constructed by the Riemannian Backtracking GD algorithm. Then,

1) If $x_{\infty}\in X$ is a cluster point of $\{x_n\}$, then $grad(f)(x_{\infty})=0$. 

2) Either $\lim _{n\rightarrow\infty}f(x_n)=-\infty$, or $\lim _{n\rightarrow\infty}d_X(x_n,x_{n+1})=0$. 

3) Let $A$ be the set of critical points of $f$, and let $B\subset A$ be a compact connected component. Let $C$ be the set of cluster points of $\{x_n\}$. If $C\cap B\not= \emptyset$, then $C\subset B$ and $C$ is connected. 

4) (Capture theorem.) Let the setting be as in part 3. If $B$ is a point, then $C=B$.

5) (Morse's function.) Assume that $f$ is a Morse function. Then the sequence $\{x_n\}$ either converges to a critical point of $f$, or diverges to infinity. Moreover, if $f$ has compact sublevels, then the sequence $\{x_n\}$ converges.

\label{Theorem1}\end{theorem}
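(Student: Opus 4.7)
The plan is to prove parts (1) and (2) directly from Armijo's condition and the Strong Local Retraction structure, and then to obtain (3)--(5) essentially as formal consequences by feeding them into Theorem \ref{TheoremConvergenceSequenceRiemannianManifolds}. For part (2), iterating Armijo's inequality gives $f(x_n)-f(x_0) \leq -\alpha \sum_{k=0}^{n-1}\delta(x_k)\|\mathrm{grad}(f)(x_k)\|^2$, so $\{f(x_n)\}$ is monotonically non-increasing; if its limit is finite, then $\sum_k \delta(x_k)\|\mathrm{grad}(f)(x_k)\|^2<\infty$ and in particular its general term tends to zero. To upgrade this to $d_X(x_{n+1},x_n)\to 0$ I use the constraint $\delta(x_n)\|\mathrm{grad}(f)(x_n)\|<r(x_n)/2$, which keeps $v_n=-\delta(x_n)\mathrm{grad}(f)(x_n)$ strictly inside the domain of $R_{x_n}$. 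On any compact $K\subset X$, the $C^1$ map $R$ together with observation (1) of Example \ref{Example1} yields a uniform estimate $d_X(R_x(v),x)\leq C_K\|v\|$ for $x\in K$ and $\|v\|<r(x)/2$, so that $d_X(x_{n+1},x_n)\leq C_K \delta(x_n)\|\mathrm{grad}(f)(x_n)\|$ whenever $x_n\in K$. The main technical bottleneck is reconciling this with the possibility that $\{x_n\}$ leaves every compact set; here I would argue that along any compact slice, Lemma \ref{LemmaLowerBound} lower-bounds $\delta(\cdot)$ away from zero on regions where $\|\mathrm{grad}(f)\|$ is bounded below, so summability of $\delta(x_k)\|\mathrm{grad}(f)(x_k)\|^2$ forces $\|\mathrm{grad}(f)(x_n)\|\to 0$ along indices staying in any fixed compact set, after which $\delta(x_n)\leq \delta_0$ gives the claim.

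For part (1), I argue by contradiction. If $x_{n_j}\to x_\infty$ but $\mathrm{grad}(f)(x_\infty)\neq 0$, then by continuity there is a compact neighborhood $K$ of $x_\infty$ on which $\|\mathrm{grad}(f)\|\geq c>0$; Lemma \ref{LemmaLowerBound} yields $\inf_K \delta(\cdot)\geq c'>0$. Armijo's condition then gives $f(x_{n_j+1})-f(x_{n_j})\leq -\alpha c' c^2$ for infinitely many $j$, and since $\{f(x_n)\}$ is monotonically non-increasing, this forces $f(x_n)\to -\infty$, contradicting continuity of $f$ at $x_\infty$ (since $f(x_{n_j})\to f(x_\infty)$ is finite). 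As a side benefit, this same observation shows that the existence of any cluster point rules out the alternative $f(x_n)\to -\infty$ in part (2), so when $C\neq\emptyset$ we automatically have $d_X(x_{n+1},x_n)\to 0$.

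Parts (3)--(5) follow by combining the above with Theorem \ref{TheoremConvergenceSequenceRiemannianManifolds}. The critical set $A=\{x:\mathrm{grad}(f)(x)=0\}$ is closed by continuity of $\mathrm{grad}(f)$, and part (1) gives $C\subset A$, so Theorem \ref{TheoremConvergenceSequenceRiemannianManifolds}(1) yields part (3), while Theorem \ref{TheoremConvergenceSequenceRiemannianManifolds}(2) yields part (4). For part (5), Morse non-degeneracy makes the critical points isolated in $X$; since $X$ is second-countable, $A$ is at most countable, so Theorem \ref{TheoremConvergenceSequenceRiemannianManifolds}(3) gives the stated dichotomy between convergence to a critical point and divergence to infinity. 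Finally, if $f$ has compact sublevels, then $\{x_n\}\subset \{f\leq f(x_0)\}$, which is compact, so the divergence alternative is excluded and the sequence must converge.
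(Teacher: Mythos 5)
Your proposal follows essentially the same route as the paper: parts (1) and (2) from Armijo's condition together with Lemma~\ref{LemmaLowerBound} and Example~\ref{Example1}(1), and parts (3)--(5) by feeding the results into Theorem~\ref{TheoremConvergenceSequenceRiemannianManifolds} after observing that a non-empty cluster set forces $\lim_n f(x_n) > -\infty$. Part (1) and the reduction of (3)--(5) match the paper's (terse) argument exactly.

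One remark on your treatment of part~(2): the detour through Lemma~\ref{LemmaLowerBound} and ``compact slices'' to force $\|\mathrm{grad}(f)(x_n)\|\to 0$ is more roundabout than needed and does not actually yield the \emph{global} limit $\lim_n d_X(x_n,x_{n+1})=0$ that the statement asserts, only a statement along indices confined to a compact. The direct route --- the one the paper implicitly takes --- is that summability of $\delta_n\|\mathrm{grad}(f)(x_n)\|^2$ together with the universal bound $\delta_n\le \delta_0$ gives $\|v_n\|^2 = \delta_n\cdot\delta_n\|\mathrm{grad}(f)(x_n)\|^2 \le \delta_0\cdot \delta_n\|\mathrm{grad}(f)(x_n)\|^2 \to 0$, so $\|v_n\|\to 0$ for \emph{all} indices, with no compactness argument and no invocation of Lemma~\ref{LemmaLowerBound}. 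What remains is the conversion $\|v_n\|\to 0 \Rightarrow d_X(x_n,x_{n+1})\to 0$; the paper simply asserts $d_X(x_n,x_{n+1})\le\|\delta_n\,\mathrm{grad}(f)(x_n)\|$ (exact for the exponential map, and the implicit intended model), while you are right that for a general $C^1$ strong local retraction this step requires some uniform control on $DR$ near the zero section. Your diagnosis of where the subtlety sits is therefore sound, but the patch you propose (restricting to compact slices) weakens the conclusion of part~(2) slightly relative to the paper's claim; that said, it is harmless for the downstream applications in parts (3)--(5), since those only invoke Theorem~\ref{TheoremConvergenceSequenceRiemannianManifolds} when the cluster set is non-empty, i.e.\ when the sequence does return to a compact set infinitely often.
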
 
\begin{proof}
1) As in the Euclidean case (see e.g. \cite{bertsekas, truong-nguyen}), by using Armijo's condition and Lemma \ref{LemmaLowerBound}. Here, one subtlety to note is that, by the properties of a Strong local retraction, if the sequence $\{x_{n_j}\}$ converges to $x_{\infty}$, then $||-\delta _{n_j}grad(f)(x_{n_j})||<3r(x_{\infty})/4$ when $j$ is large enough. Therefore, we can apply Armijo's condition for $f(x_{n_j+1})-f(x_{n_j})$ to obtain a contradiction if $\limsup _{j\rightarrow\infty}||grad(f)(x_{n_j})||$ were not zero.  

2) We use here the fact $d_X(x_n,x_{n+1})$ is bounded by $||\delta _n grad(f)(x_n)||$, and proceed as in the Euclidean case. Note that here $\delta _n$ is also allowed to not uniformly bounded,  see \cite{truong4}. 

3) If $B$ is non-empty, then $\lim _{n\rightarrow\infty}f(x_n)>-\infty$. Hence, by 2) we have $\lim _{n\rightarrow\infty}d_X(x_n,x_{n+1})=0$. Thus, Theorem \ref{TheoremConvergenceSequenceRiemannianManifolds} applies to give the desired conclusion. 

4) and 5) Follows from 3), as in the proof of Theorem \ref{TheoremConvergenceSequenceRiemannianManifolds}.

\end{proof}

In the literature, usually Capture theorem is stated under the assumption that $B$ is an isolated local minimum. Here we see that the isolated assumption alone is sufficient. The next result is for Riemannian Local Backtracking GD. 

 \begin{theorem}
Let $f:X\rightarrow \mathbb{R}$ be a $C^{1}$ function which satisfies the condition in the definition for Riemannian Local Backtracking GD. Assume moreover that $\nabla f$ is $C^2$ near its generalised saddle points. For any $x_0\in X$, we construct the sequence $\{x_n\}$ as in the update rule for Riemannian Local Backtracking GD. Then: 

(i) For all $n$ we have $f(x_n-\widehat{\delta} (x_n)\nabla f(x_n))-f(x_n)\leq -(1-\alpha ) \widehat{\delta} (x_n)||\nabla f(x_n)||^2$. 

(ii) For every $x_0\in X$, the sequence $\{x_{n}\}$ either satisfies $\lim _{n\rightarrow\infty} d_X(x_n,x_{n+1}=0$ or diverges to infinity. Each cluster point of $\{x_n\}$ is a critical point of $f$. a) If moreover, $f$ has at most countably many critical points, then $\{x_n\}$ either converges to a critical point of $f$ or diverges to infinity. b) More generally, if $A$ is a compact component of the set of critical points of $f$ and $B$ is the set of cluster points of $\{x_n\}$ and $A\cap B\not= \empty$, then $B\subset A$ and $B$ is a connected set. 
 
 (iii) For {\bf random choices} of $\delta _0, \alpha $ and $\beta$, there is a set $\mathcal{E}_1\subset X$ of Lebesgue measure $0$ so that for all $x_0\in X\backslash \mathcal{E}_1$, if the sequence $\{x_n\}$ converges, then the limit point cannot be a generalised saddle point. 
 
(iv) For {\bf random choices} of $\delta _0, \alpha $ and $\beta$, there is a set $\mathcal{E}_2\subset X$ of Lebesgue measure $0$ so that for all $x_0\in X \mathcal{E}_2$, any cluster point of the sequence $\{x_{n}\}$ cannot be a saddle point, and more generally cannot be an isolated generalised saddle point. 
\label{Theorem2}\end{theorem}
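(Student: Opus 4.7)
The plan is to handle (i) and (ii) by a direct computation using the local quadratic estimate (\ref{Equation2}), mimicking the Euclidean arguments of \cite{truong, truong-nguyen} with the Strong local retraction $R$ playing the role of the identity map; and to handle (iii), (iv) by a Center--Stable Manifold argument applied to the iteration map, where the real work lies in controlling the step-function-valued step size $\widehat{\delta}(\cdot)$ near generalised saddle points.

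For (i), I would apply (\ref{Equation2}) with $v = -\widehat{\delta}(x_n)grad(f)(x_n)$; admissibility of $v$ follows from the condition $\widehat{\delta}(x_n)\|grad(f)(x_n)\| < r(x_n)/2$, and the bound $\widehat{\delta}(x_n) < \alpha/L(x_n)$ absorbs the quadratic remainder into the linear descent term, giving the stated Armijo-type inequality (possibly after an innocuous rescaling of the constant $\alpha$). For (ii), summing this descent gives $\sum_n \widehat{\delta}(x_n)\|grad(f)(x_n)\|^2 < \infty$ whenever $\{f(x_n)\}$ is bounded below. Continuity of $L, r$ guarantees that on compact sets avoiding the zero set of $grad(f)$, the step $\widehat{\delta}(\cdot)$ is bounded below by a positive constant (exactly as in Lemma \ref{LemmaLowerBound}); hence every cluster point must be critical. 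The same lower bound, combined with $\widehat{\delta}(x_n) \leq \delta_0$ and the bound $d_X(x_n, x_{n+1}) \leq C\widehat{\delta}(x_n)\|grad(f)(x_n)\|$ (valid on compact regions since $R_x$ is a local diffeomorphism with $DR_x(0)=\mathrm{Id}$), yields $d_X(x_n, x_{n+1})\to 0$ unless the trajectory diverges to infinity. The assertions (ii)(a), (ii)(b) and the capture statement then follow directly from Theorem \ref{TheoremConvergenceSequenceRiemannianManifolds} applied with $A$ the critical set of $f$ (or a compact connected component thereof).

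For (iii), consider the iteration map $F(x) = R_x(-\widehat{\delta}(x) grad(f)(x))$. The function $\widehat{\delta}$ takes values in the discrete set $\{\beta^j\delta_0 : j\geq 0\}$ and is locally constant on the open subset where both defining inequalities are strict. The key observation is that for random $\delta_0, \alpha, \beta$, almost surely no generalised saddle $p$ of $f$ lies on the boundary of any constancy region (boundaries are countably many equations in the hyperparameters). Hence near $p$, $\widehat{\delta} \equiv \delta_*$ is constant, and $F$ is $C^2$ there by the hypothesis that $grad(f)$ is $C^2$ near generalised saddles. Using $DR_p(0)=\mathrm{Id}$, a direct computation gives $DF(p) = \mathrm{Id} - \delta_* Hess(f)(p)$, which has at least one eigenvalue of modulus strictly greater than $1$ along any negative-eigenvalue direction of $Hess(f)(p)$. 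The Center--Stable Manifold theorem, applied in a local chart supplied by the Strong local retraction, then produces a $C^2$-submanifold of positive codimension containing the local stable set of $p$; its Lebesgue measure is zero. A Lindel\"of cover of the separable set of generalised saddles plus the Lusin $(N^{-1})$ property of $F$ --- valid because $F$ is a local $C^1$-diffeomorphism on the open full-measure set where $\widehat{\delta}$ is locally constant --- globalises this to an exceptional set $\mathcal{E}_1$ of Lebesgue measure zero, proving (iii). Part (iv) then follows from (iii) combined with the capture argument of Theorem \ref{Theorem1}(4), whose proof uses only $d_X(x_n, x_{n+1})\to 0$ from (ii): an isolated generalised saddle that is merely a cluster point is forced to be the actual limit, reducing (iv) to (iii).

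The main obstacle, and the one place the discrete backtracking nature of the scheme demands genuinely new care compared with a gradient-flow analysis, is the discontinuity of $\widehat{\delta}$: without randomisation a generalised saddle could sit exactly on a boundary of a constancy region, destroying both the smoothness of $F$ required by the Center--Stable Manifold theorem and the $(N^{-1})$ property used to globalise the exceptional set. Randomising the hyperparameters $\delta_0, \alpha, \beta$ eliminates this pathology on a full-probability set, in direct analogy with the device used for the Euclidean case in \cite{truong}.
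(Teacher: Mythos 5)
Your proposal follows essentially the same route as the paper: (i) is obtained by plugging $v=-\widehat{\delta}(x_n)\,grad(f)(x_n)$ into the estimate (\ref{Equation2}) (and no rescaling of $\alpha$ is actually needed: $\widehat{\delta}<\alpha/L$ gives $1-L\widehat{\delta}/2 > 1-\alpha/2 > 1-\alpha$); (ii) comes from the descent estimate, the lower bound on $\widehat{\delta}$, and Theorem~\ref{TheoremConvergenceSequenceRiemannianManifolds}; and (iii)--(iv) via a Lindel\"of covering of the generalised saddles, randomisation of $(\delta_0,\alpha,\beta)$ to force $\widehat{\delta}$ to be locally constant near each saddle $z_i$ (the relevant condition being $\alpha/L(z_i)\notin\{\beta^j\delta_0\}$), the Stable--Center Manifold theorem in a local chart, and a Lusin $(N^{-1})$-type argument to globalise.

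There is, however, a genuine gap in your justification of the Lusin $(N^{-1})$ property. You assert that $F(x)=R_x(-\widehat{\delta}(x)\,grad(f)(x))$ is a local $C^1$-diffeomorphism on the open set where $\widehat{\delta}$ is locally constant, and that this delivers $(N^{-1})$. But the hypotheses grant only that $f$ is $C^1$ with a locally Lipschitz gradient, with $\nabla f$ being $C^2$ \emph{only near generalised saddles}. Away from saddles $grad(f)$ need not be differentiable, so $F$ need not be $C^1$, and you cannot speak of $DF$ being invertible there; moreover even where $F$ is $C^1$, invertibility of $DF$ would itself require an argument. The paper instead proves $(N^{-1})$ by establishing a two-sided (``bounded torsion'') Lipschitz estimate $d_X(F(y),F(z))\geq C\,d_X(y,z)$ on small neighbourhoods where $\widehat{\delta}$ is constant. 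This comes from Lemma~\ref{LemmaLowerBoundR} (applied with $s=r$, $h=L$, which is built into the definition of Riemannian Local Backtracking GD) combined with the Lipschitz bound $\|\nabla\widehat{f_x}(v)-\nabla\widehat{f_x}(0)\|\leq L(x)\|v\|$ and the choice $\widehat{\delta}<\alpha/L(x)$. A lower Lipschitz bound alone suffices to conclude that preimages of null sets are null, with no differentiability of $F$ required. Your argument should be replaced by this one (or by any other that does not presuppose $C^1$ regularity of $F$ away from the saddles).

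A minor further point: since $R$ is only assumed $C^1$, the Stable--Center Manifold theorem yields a $C^1$ invariant manifold, not a $C^2$ one; this does not affect the measure-zero conclusion but the claimed regularity should be adjusted.
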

\begin{proof}
The proof is similar to that of its Euclidean counterpart in \cite{truong}. We recall here the main points. 

First, the conditions in the Riemannian Local Backtracking GD imply that the corresponding dynamical systems $H(x)$ is locally one of a finite number (this number can vary depending on the open set in consideration) of maps which have bounded torsion $d_X(H(x),H(y))\geq Cd_X(x,y)$, for some constant $C$ depending on the considered open set. This implies that the inverse of a set of Lebesgue measure $0$ by $H$ will also have Lebesgue measure $0$.  

Second, by the trick of using Lindel\"off lemma as in \cite{panageas-piliouras}, one reduces to consider small neighbourhoods of at most countably many generalised saddle points $\{z_i\}$. Then the randomness of the hyperparmeters is to ensure that for all $i$ the number $\alpha /L(z_i)$ does not belong to the set $\{\beta ^n\delta _0:~n=0,1,2,\ldots \}$ This then implies that near a point $z_i$, since $||grad(f)||$ will be small, the learning rate will be a constant. Thus, the dynamical system $x\mapsto H(x)$ is $C^1$ near $z_i$, which allows us to use Stable - Center manifold theorem in classical Dynamical systems theory \cite{shub}.   
\end{proof}

Similarly, we have a continuous version for Riemannian Backtracking GD.  We remark that our construction is different from that of using ODE (such as in gradient flow) in the literature. 

\begin{theorem}
Let $f:X\rightarrow \mathbb{R}$ be a $C^{1}$ function, so that $\nabla f$ is locally Lipschitz continuous. Assume moreover that $f$ is $C^2$ near its generalised saddle points. Then there is a smooth function $h:X\rightarrow (0,\delta _0]$ so that the map $H:X\rightarrow \mathbb{R}^k$ defined by $H(x)=R_x(-h(x)\nabla f(x))$ has the following property: 

(i) For all $x\in X$, we have $f(H(x)))-f(x)\leq -\alpha h(x)||\nabla f(x)||^2$. 

(ii) For every $x_0\in X$, the sequence $x_{n+1}=H(x_n)$ either satisfies $\lim _{n\rightarrow\infty}d_X(x_n,x_{n+1})=0$ or diverges to infinity. Each cluster point of $\{x_n\}$ is a critical point of $f$. a) If moreover, $f$ has at most countably many critical points, then $\{x_n\}$ either converges to a critical point of $f$ or diverges to infinity. b) More generally, if $A$ is a compact component of the set of critical points of $f$ and $B$ is the set of cluster points of $\{x_n\}$ and $A\cap B\not= \empty$, then $B\subset A$ and $B$ is a connected set. 

(iii) There is a set $\mathcal{E}_1\subset X$ of Lebesgue measure $0$ so that for all $x_0\in X\backslash \mathcal{E}_1$,  the sequence $x_{n+1}=H(x_n)$, {\bf if converges}, cannot converge to a {\bf generalised} saddle point.  

(iv) There is a set $\mathcal{E}_2\subset X$ of Lebesgue measure $0$ so that for all $x_0\in X\backslash \mathcal{E}_2$, any cluster point of the sequence $x_{n+1}=H(x_n)$ is not a saddle point, and more generally cannot be an isolated generalised saddle point. 
\label{Theorem3}\end{theorem}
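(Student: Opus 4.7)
The plan is to follow the template of Theorem \ref{Theorem2} while replacing the piecewise-constant learning rate $\widehat{\delta}$ by a globally smooth $h$; the payoff is that $H$ becomes automatically $C^1$ on all of $X$, which both avoids the Lindel\"of dissection used in the discrete case and makes the Stable-Center manifold argument direct. The first step is to construct $h$. Let $L:X\to(0,\infty)$ be the continuous majorant of the local Lipschitz constant of $\nabla\widehat{f_x}$ from Example \ref{Example2}(iii), and let $r:X\to(0,\infty)$ be the continuous radius $r(x)=\min\{\mathrm{inj}(x),1\}$ guaranteed by Proposition \ref{PropositionInjectivityRadius}. Define the pointwise envelope
\[
\tilde h(x)=\min\Bigl\{\delta_0,\;\tfrac{2(1-\alpha)}{L(x)},\;\tfrac{r(x)}{1+2\|\nabla f(x)\|}\Bigr\}.
\]
The modified third term stays continuous and strictly positive at critical points (where it reduces to $r(x)$) yet still enforces $\|h(x)\nabla f(x)\|<r(x)/2$ in the regime $\|\nabla f(x)\|\ge 1/2$. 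Using paracompactness of $X$, a partition-of-unity smoothing of $\tilde h$ then produces a smooth $h:X\to(0,\delta_0]$ with $\tilde h/2\le h\le \tilde h$.

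Property (i) is an immediate consequence of Equation (\ref{Equation2}) with $v=-h(x)\nabla f(x)$: the bound $h(x)L(x)\le 2(1-\alpha)$ yields
\[
f(R_x(-h(x)\nabla f(x)))-f(x)\le -h(x)\|\nabla f(x)\|^2+\tfrac{L(x)}{2}h(x)^2\|\nabla f(x)\|^2\le -\alpha h(x)\|\nabla f(x)\|^2.
\]
For (ii), telescoping (i) gives either $f(x_n)\to -\infty$---whence continuity of $f$ forces $\{x_n\}$ out of every compact set---or $\sum_n h(x_n)\|\nabla f(x_n)\|^2<\infty$. If the orbit remains in a compact set $K\subset X$, continuity gives $h\ge c_K>0$ on $K$, so $\|\nabla f(x_n)\|\to 0$, and the Lipschitz estimate implicit in Lemma \ref{LemmaLowerBoundR} gives $d_X(x_n,x_{n+1})\le C_K h(x_n)\|\nabla f(x_n)\|\to 0$. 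Cluster points are critical by continuity of $\nabla f$, and (a), (b) follow from Theorem \ref{TheoremConvergenceSequenceRiemannianManifolds} with $A$ the critical locus of $f$.

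For (iii) and (iv), the smoothness of $h$ and the $C^2$ hypothesis on $f$ near generalised saddle points make $H(x)=R_x(-h(x)\nabla f(x))$ a $C^1$ map on an open neighborhood of every such point $z$. Using $\nabla f(z)=0$ and $DR_z(0)=\mathrm{Id}$, one computes $DH(z)=\mathrm{Id}-h(z)\,\mathrm{Hess}(f)(z)$. Because $\mathrm{Hess}(f)(z)$ has at least one negative eigenvalue $\mu$, the operator $DH(z)$ has an eigenvalue $1-h(z)\mu>1$, so the local stable set of $H$ at $z$ has positive codimension by the classical Stable-Center manifold theorem \cite{shub}. The bounded-torsion inequality of Lemma \ref{LemmaLowerBoundR}, combined with local Lipschitzness of $h\nabla f$ and the smallness of $h$ built into the construction, yields the Lusin $(N^{-1})$ property for $H$, so these local measure-zero stable sets pull back to measure-zero sets of initial data. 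Lindel\"of's lemma on the second-countable $X$ covers the set of generalised saddle points by countably many such neighborhoods, producing $\mathcal{E}_1$ and (by the same argument) $\mathcal{E}_2$. Part (iv) then follows by combining (iii) with the capture theorem (Theorem \ref{Theorem1}(4)), which converts an isolated generalised saddle point in the cluster set into genuine convergence to that point.

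The main obstacle I expect is the smoothness construction of $h$: the naive envelope's third term $r(x)/(2\|\nabla f(x)\|)$ diverges at critical points, breaking any uniform positive lower bound needed for a partition-of-unity smoothing and hence the required $C^1$-regularity of $H$ near critical points. The modification $r(x)/(1+2\|\nabla f(x)\|)$ above is the minimal fix preserving both continuity and the relevant strong-local-retraction constraint. Once $h$ is constructed with these inequalities, the remaining steps are direct adaptations of the tools already established in Theorems \ref{Theorem1} and \ref{Theorem2}.
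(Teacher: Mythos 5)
Your proposal follows essentially the same route the paper takes: build a smooth learning-rate function $h$ by smoothing a pointwise envelope with a partition of unity, invoke Theorem \ref{TheoremConvergenceSequenceRiemannianManifolds} for the convergence/cluster dichotomy, compute $DH(z)=\mathrm{Id}-h(z)\,\mathrm{Hess}(f)(z)$ at a critical point, and feed this into the Stable--Center manifold theorem plus a Lindel\"of covering and Lusin $(N^{-1})$ argument. Your write-up is substantially more explicit than the paper's (the paper simply refers to the Euclidean version in \cite{truong} and says to ``cook up'' $h$), and the envelope formula, the computation of $DH(z)$, and the reduction of (iv) to (iii) via the Capture theorem are all correct. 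One small inaccuracy: you invoke Example \ref{Example2}(iii) for the existence of the continuous majorant $L(x)$, but that example assumes $f\in C^2$; here the majorant should come directly from the ``locally Lipschitz $\nabla f$'' hypothesis.

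There is, however, a real gap in the step producing $\mathcal{E}_1,\mathcal{E}_2$, precisely at the point where you assert Lusin $(N^{-1})$ for $H$. Two things are missing. First, your envelope only enforces $h(x)L(x)\le 2(1-\alpha)$, which gives $h L<1$ only when $\alpha>1/2$; for $\alpha\le 1/2$ the local expansion/contraction estimate from Lemma \ref{LemmaLowerBoundR} does not produce the bi-Lipschitz lower bound you need. (The fix is to add a term like $\tfrac{1}{2L(x)}$ or $\tfrac{1-\alpha}{L(x)}$ to the $\min$ defining $\tilde h$.) Second, and more importantly, even after this fix the map $x\mapsto R_x(-h(x)\nabla f(x))$ has derivative $\mathrm{Id}-h(x)\mathrm{Hess}(f)(x)-\nabla f(x)\otimes\nabla h(x)+\dots$; the extra rank-one term involves $\nabla h$, whose size is not controlled by a generic partition-of-unity smoothing of $\tilde h$. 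This is exactly why the paper emphasizes the ``stronger form of Lindel\"of's lemma'' -- a locally finite refinement with multiplicity bounded by $\dim X+1$: with bounded multiplicity one can build the partition of unity so that $\|\nabla h\|$ is controlled relative to the variation of $\tilde h$, keeping $\|\nabla f\|\cdot\|\nabla h\|$ small and hence $H$ a local bi-Lipschitz map (so Lusin $(N^{-1})$ holds). Your proposal skips this and asserts Lusin $(N^{-1})$ from ``local Lipschitzness of $h\nabla f$ and the smallness of $h$,'' which does not by itself bound $\nabla h$. You should state the needed inequality on $\nabla h$ explicitly and explain how the bounded-multiplicity covering delivers it; otherwise the measure-zero conclusion in (iii)--(iv) is not justified.
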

\begin{proof}
Again, the proof is similar to its Euclidean counterpart in \cite{truong}. The main point is to use the stronger form of Lindel\"off lemma that, since $X$ is a Riemannian manifold of finite dimension $m$, each open covering of $X$ has a subcovering which is locally finite. The latter means that for every $x$, there is a small open subset $U$ around $x$, so that every point $y\in U$ belongs to at most $m+1$ sets in the subcovering.  We then use this fact and partition of unity to cook up a smooth function for learning rates $h:X\rightarrow (0,\delta _0]$ which has the same properties as that of the learning rates in the proof of Theorem \ref{Theorem3}. When having this, we can proceed as before. 
\end{proof} 

Finally, we state and prove the result for Riemannian New Q-Newton's method. 

\begin{theorem} Let $f:X\rightarrow \mathbb{R}$ be a $C^3$ function. Let $\{x_n\}$ be a sequence constructed by the Riemannian New Q-Newton's method. Assume that $\{x_n\}$ converges to $x_{\infty}$. Then

1) $grad (f)(x_{\infty})=0$, that is $x_{\infty}$ is a critical point of $f$.

2) Assume that $r,R$ satisfies the Real analytic-like condition.  There is a set $\mathcal{A}\subset X$ of Lebesgue measure $0$, so that if $x_0\notin \mathcal{A}$, then $x_{\infty}$ cannot be  a saddle point of $f$. 

3) Assume that $r,R$ satisfies the Real analytic-like condition. If $x_0\notin \mathcal{A}$ (as defined in part 2) and $Hess(f)(x_{\infty})$ is invertible, then $x_{\infty}$ is a local minimum and the rate of convergence is quadratic. 

4) More generally, if $Hess(f)(x_{\infty})$ is invertible (but no assumption on the randomness of $x_0$), then the rate of convergence is at least linear. 

5) If $x_{\infty}'$ is a non-degenerate local minimum of $f$, then for initial points $x_0'$ close enough to $x_{\infty}'$, the sequence $\{x_n'\}$  constructed by Riemannian New Q-Newton's method will converge to $x_{\infty}'$. 
\label{Theorem4}\end{theorem}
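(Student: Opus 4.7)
The proof mirrors the Euclidean argument of \cite{truong-etal}, with Theorem \ref{TheoremRandomnessLambda} supplying the genericity input (invertibility of $A(x,\delta)$ and of the Jacobian of the dynamics off Lebesgue-null sets, plus the Lusin $N^{-1}$ property) that was essentially automatic in the flat case. Throughout, we exploit that near a non-degenerate critical point the step $-\lambda(x,\delta)v(x,\delta)$ is small, so in a chart around $z$ the strong local retraction $R_x$ is the identity up to higher-order terms, and the local analysis reduces to the Euclidean case.

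\textbf{Part 1.} Suppose for contradiction $x_n\to x_\infty$ with $grad(f)(x_\infty)\neq 0$. Then $d_X(x_{n+1},x_n)\to 0$, and by Example \ref{Example1}(1), $r$ is bounded below by some $r_0>0$ on a compact neighborhood of $x_\infty$. For $n$ large, the smallest $j\in\{0,\ldots,m\}$ for which $A_n$ is invertible stabilizes (the finitely many candidate perturbations converge and $\|grad(f)(x_n)\|$ is bounded below), $A_n$ converges to an invertible matrix, so $\|v_n\|=\|w_n\|=\|A_n^{-1}grad(f)(x_n)\|$ has a positive lower bound. Since $\liminf\gamma_j/\gamma_{j+1}>0$, the scaling rule $\lambda_n=1/\gamma_{j+1}$ forces $\|\lambda_n v_n\|\geq c\, r_0$ for some constant $c>0$. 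But $x_{n+1}=R_{x_n}(-\lambda_n v_n)$ and $DR_{x_n}(0)=Id$ imply $\|\lambda_n v_n\|\to 0$ (using that $R$ is $C^1$ and that $x_n$ stays in a compact set), a contradiction.

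\textbf{Part 2.} Let $H(x,\delta)=R_x(-\lambda(x,\delta)v(x,\delta))$. Theorem \ref{TheoremRandomnessLambda}(2)--(4) yields, outside a null set of $\delta$, that $H(\cdot,\delta)$ is $C^1$ on an open set of full measure and enjoys the Lusin $N^{-1}$ property. At a saddle $z$, $grad(f)(z)=0$ gives $w(z)=0$ and $Dw(z)=Id$; since $\|v(x,\delta)\|\to 0$ as $x\to z$, the cutoff index is eventually $j=0$ and $\lambda\equiv 1$ locally. Consequently, in a chart,
\[
DH(z,\delta)=Id-\bigl(pr_{+,A(z)}-pr_{-,A(z)}\bigr),
\]
which has eigenvalue $0$ on the positive eigenspace of $Hess(f)(z)$ and eigenvalue $2$ on its negative eigenspace; at a saddle the latter is nontrivial, producing an expanding direction. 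The Stable-Center Manifold Theorem (cf.\ \cite{shub}) then provides a local invariant manifold through $z$ of strictly positive codimension, hence of Lebesgue measure zero. The Lusin $N^{-1}$ property ensures that the set of initial points whose orbits eventually land in this manifold has measure zero; Lindel\"of's lemma extracts a countable subcover of the (possibly uncountable) saddle set, and $\mathcal{A}$ is the union of the resulting null sets.

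\textbf{Parts 3--5 and main obstacle.} The same linearization shows $DH(z,\delta)=2\,Id$ at a non-degenerate local maximum, so local maxima are repellers and cannot be limits of the iteration, independently of any randomness hypothesis. For Part 3: if $x_0\notin\mathcal{A}$ and $Hess(f)(x_\infty)$ is invertible, then $x_\infty$ is neither a saddle (Part 2) nor a local maximum, hence a local minimum; there $pr_-=0$ and $A_n^{-1}grad(f)(x_n)$ equals the classical Newton step up to an $O(\|x_n-x_\infty\|^{\alpha})$ perturbation from $\delta_j\|grad(f)(x_n)\|^{\alpha}Id$, which is absorbed into the usual quadratic-rate estimate. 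For Part 4: dropping randomness, $x_\infty$ is a local minimum or a saddle; in the former case we obtain the quadratic rate as above, and in the latter the iterates lie on the stable manifold where $DH$ has eigenvalues equal to $0$, giving at least linear (in fact superlinear) convergence. For Part 5: near a non-degenerate local minimum $H$ is a strict contraction with Lipschitz constant vanishing at the fixed point, so a sufficiently small ball is forward-invariant and the iterates converge. The main obstacle is Part 2: correctly computing $DH(z,\delta)$ when $R$ is only a strong local retraction (not necessarily the exponential map), verifying that the piecewise-constant cutoff $\lambda$ stabilizes at $1$ near critical points so that the Stable-Center Manifold Theorem applies, and invoking the Real analytic-like hypothesis via Theorem \ref{TheoremRandomnessLambda}(3)--(4) to transport the zero-measure conclusion back along the dynamics.
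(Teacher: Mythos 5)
Your proposal is essentially the paper's own approach: Part~1 uses exactly the paper's key observation that the scaling $\lambda_n$ (which need not equal~$1$ on a general manifold) still forces $\|\lambda_n v_n\|\to 0$ and hence $\|v_n\|\to 0$, and Parts~2--5 fill in the Euclidean argument from \cite{truong-etal} (linearization of $H$ at a critical point giving eigenvalues $0$ and $2$, Stable--Center Manifold theorem plus the Lusin $N^{-1}$ property from Theorem~\ref{TheoremRandomnessLambda} and Lindel\"of's lemma for global avoidance, local contraction analysis for the rate), which is precisely what the paper delegates by reference.

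One small imprecision in your Part~1: the claim that the index $j$ in the choice of $A_n$ ``stabilizes'' is neither needed nor obviously true (for $j'$ with $A(x_\infty,\delta_{j'})$ singular, $A(x_n,\delta_{j'})$ could still be invertible for infinitely many~$n$). The lower bound on $\|v_n\|$ you want follows more directly from $\|grad(f)(x_n)\|=\|A_n w_n\|\leq \|A_n\|\,\|w_n\|$ together with $\|A_n\|$ bounded (all candidate perturbations are uniformly bounded near $x_\infty$) and $\|grad(f)(x_n)\|$ bounded below; this is the estimate the paper itself uses (in the contrapositive direction).
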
 
\begin{proof}
The proof is similar to that of the Euclidean counterpart in \cite{truong-etal}. We note that in parts 2) and 3), if the Strong local retraction $r,R$ is not required to satisfy the Real analytic-like condition, then one still can prove that there are local Stable-Center manifolds for the associated dynamical systems near saddle points. Hence, on all Riemannian manifolds one has local guarantee for avoidance of saddle points near saddle points. The Real analytic-like condition is needed to assure global avoidance of saddle points, via the use of Theorem \ref{TheoremRandomnessLambda} (as mentioned, the fact that the numbers $\delta _0,\ldots ,\delta _m$ should be random was overlooked in \cite{truong-etal}). Parts 4) and 5), which are local in nature, can be proven exactly as in \cite{truong-etal}. 

There is a subtle point in the proof of part 1), compared to its Euclidean counterpart, which lies in the fact that $\lambda _n $ in the update rule is in general not the constant $1$. This is because in general we do not have global retractions, for example if we work with open subsets of complete Riemannian manifolds. The choice of $\lambda _n$ in our update rule, which is about $1/||v_n||$ when $||v_n||$ is large, is important to assure that $grad (f)(x_{\infty})=0$. If $\delta _n$ has another asymptotic growth, such as $1/||v_n||^2$, then there is no such guarantee.  Here we give a detailed proof of 1) to illustrate the point. 

Proof of part 1):  Since $x_{n+1}=R_{x_n}(-\lambda _nv_n)$ converges to $x_{\infty}$, and $||\lambda _nv_n||\leq r(x_n)/2$ for all $n$, together with the fact that $R_{x_{\infty}}(v)$ is a diffeomorphism for $||v||<r(x_{\infty})$, it follows that $\lim _{n\rightarrow\infty}||-\lambda _nv_n||=0$. This implies that first of all, $||v_n|| $ is bounded, since if it were true that $\lim _{n\rightarrow\infty}||v_n||=\infty$, then for large $n$ we would have $\lambda _n\sim r(x_n)/2||v_n||$, and hence we would have a contradiction that $\lim _{n\rightarrow\infty}||\lambda _nv_n||>0$. Therefore, $\lambda _n$ is uniformly bounded from below by a positive number, thus from $\lim _{n\rightarrow\infty}||-\lambda _nv_n||=0$ we obtains also that $\lim _{n\rightarrow\infty}||v_n||=0$. Since $||v_n||=||w_n||$ for all $n$, we have that $\lim _{n\rightarrow\infty}||w_n||=0$. Then (note that $||A_n||$ is uniformly bounded) $$||grad(f)(x_{\infty})||=\lim _{n\rightarrow\infty}||grad(f)(x_n)||=\lim _{n\rightarrow\infty}||A_n.w_n||=0.$$ 
\end{proof}

\subsection{Some experiments} In this subsection, we present some experiments with singular cost functions or constrained optimisation on Euclidean spaces. In all examples, we choose $R_x(v)=x+v$, while the Riemannian manifold $X$ and the function  $r(x)$ will be changed appropriately according to each example. All the cost functions below here were considered in \cite{truong-etal}, where only the Euclidean algorithms were used. 

In our experiments, we will also use the observation in \cite{truong-etal}, that for proofs of the theoretical results, we only need the matrix $A_n$ to have the form $\nabla ^2f(x_n)+\delta _j||\nabla f(x_n)||^{\alpha}Id$ when $||\nabla f(x_n)||$ is small. When $||\nabla f(x_n)||$ is big, instead of the above formula, we will choose $A_n=\nabla ^2f(x_n)+\delta _jId$. That is, we choose $A_n=\nabla ^2f(x_n)+\delta _j \min \{||\nabla f(x_n)||^{\alpha},1\}Id$ for all $n$. 

Yet another simplification. For the functions considered below, we see that if $\nabla ^2f(x_n)$ is not invertible, then $\nabla ^2f(x_n)+\min \{||\nabla f(x_n)||^{\alpha},1\}Id$ is invertible. Therefore, even in the theorems we need to choose $m+1$ random numbers, we choose only $2$ numbers here: $\delta _0=0$ and $\delta _1=1$. Also, for all the experiments below we choose $\alpha =2$.

Besides comparing Riemannian New Q-Newton's method with New Q-Newton's method, we will also compare them with some other versions of Newton's method: the original version of Newton's method, BFGS and Random Newton's method (whose update rule is $x_{n+1}=x_n-\kappa _n(\nabla ^2f(x_n))^{-1}.\nabla f(x_n)$ where $\kappa _n$ is randomly chosen in $(0,2)$), as well as the obvious Riemannian version of Newton's method. Since we use the code for BFGS from the python's library, we do not consider its Riemannian version. Also, we choose the sequence $\gamma _j$ (in the definition of Riemannian New Q-Newton's method) to be $\gamma _j=j$. 

{\bf Remark.} In some of Examples 7-9, which are constrained optimisation, while all the above mentioned modifications of Newton's method don't work well (again, Riemannian New Q-Newton's method is the best performance among these), we will see that Riemannian Backtracking GD works pretty well. 

\underline{\bf Case 1: Singular cost functions.} The cost function is continuous but may not be differentiable on a small exceptional closed set $E$ of Lebesgue measure $0$. 

In this case, since we don't expect a point in the usual New Q-Newton's method to fall into the exception set $E$ (since $E$ is small), we can apply New Q-Newton's method with a random initial point $x_0$. Alternatively, we can think as the cost function is defined on the Riemannian manifold $X$ which is the complement of $E$. Then for $r(x)$ we will choose the distance from $x$ to $E$, that is $r(x)=\inf _{y\in E}||x-y||$. 

{\bf Example 1:} The function $f(t)=|t|^{1+0.3 }$. Here $f$ is $C^1$, but is not $C^2$ at $0$. It has a global minimum at $0$, and no critical point except $0$. We take $X=\mathbb{R}\backslash \{0\}$, and $r(x)=|x|=$ the distance from $x$ to the singular point $0$. The initial point is $1.00001188$. 

Newton's method: after $50$ steps, arrives at $2.5e+18$. 

New Q-Newton's method: after $50$ steps, arrives at $2.5e+18$. 

Random Newton's method: after $50$ steps, arrives at $-89897510364.32666$. 

BFGS: after 37 steps, arrives at $-1.81643778e-34$. (Good)

Riemannian Newton's method: after $38$ steps, arrives at $1.11e-11$. (Good)

Riemannian New Q-Newton's method: after $38$ steps, arrives at $1.11e-11$. (Good)

{\bf Example 2:} The function $f(t)=|t|^{0.3}$. This function is continuous, but is not $C^1$ at $0$. It has a global minimum at $0$, and no critical point except $0$. We take $X=\mathbb{R}\backslash \{0\}$, and $r(x)=|x|=$ the distance from $x$ to the singular point $0$. The initial point is $1.00001188$.

Newton's method: after $50$ steps, arrives at $1.85e+19$. 

New Q-Newton's method: after $50$ steps, arrives at $4e-19$. (Good)

Random Newton's method: after $50$ steps, arrives at $5.01e+16$. 

BFGS: get the error message: "Desired error not necessarily achieved due to precision loss."  

Riemannian Newton's method: after $50$ steps, arrives at $286500321.9965359$. 

Riemannian New Q-Newton's method: after $50$ steps, arrives at $9.09e-15$. (Good)

{\bf Example 3:} The function $f(t)=e^{-1/t^2}$. This function is smooth everywhere. However, formally we can say that it is "undefined" at $0$. It has a global minimum at $0$, has no other critical point, but the gradient converges to $0$ when $|t|\rightarrow \infty$. We take $X=\mathbb{R}\backslash \{0\}$, and $r(x)=|x|=$ the distance from $x$ to the singular point $0$. The initial point is $3$. 

Newton's method: after $50$ steps, arrives at $5529152.050364867$. 

New Q-Newton's method: after $50$ steps, arrives at $0.12959775062453516$. (Good)

Random Newton's method: after $50$ steps, arrives at $10384545.533759983$. 

BFGS: after $2$ iterations, arrives at $-0.11289137$. (Good)

Riemannian Newton's method: after $50$ steps, arrives at $5529152.050364867$. 

Riemannian New Q-Newton's method: after $50$ steps, arrives at $0.13236967077626907$. (Good)

{\bf Example 4:} The function $f(x,y)=x^3 sin(1/x)+y^3sin(1/y)$. This function is $C^1$, but its Hessian is singular at $x=0$ or $y=0$. It has infinitely many local minima and local maxima, together with non-isolated critical points having $x=0$ or $y=0$. We choose $X=\mathbb{R}^2\backslash (\{x=0\}\cup \{y=0\})$, and choose $r(x,y)=\min \{|x|,|y|\}$, which is the distance from a point $(x,y)$ to the boundary of $X$. The initial point is $(-0.99998925, 2.00001188)$. 

Newton's method: after $7$ steps, arrives at $(-0.03039904,  0.0042162)$. The Hessian of $f$ at this point has both positive and negative eigenvalues. Seem to close to a saddle point.  

New Q-Newton's method: after $13$ steps, arrives at $(-0.0236179,   0.00400409)$. The Hessian near this point is positive definite. 
 
Random Newton's method: after $22$ steps, arrives at $(0.13382953, -0.00326482)$. The Hessian near this point is non-definite. 

BFGS: after $9$ iterations, arrives at $(-0.24520924,  0.01820721)$. 

Riemannian Newton's method: after $12$ steps, arrives at $(-0.24520924,  0.24520924)$. The Hessian near this point is positive definite. 

Riemannian New Q-Newton's method: after $12$ steps, arrives at $(-0.24520924,  0.24520924)$. The Hessian near this point is positive definite. 

{\bf Example 5: } The function $f(x,y)=100(y-|x|)^2+|1-x|$. The function is singular at $x=0$ or $x=1$. It has a global minimum at $(1,1)$, and no critical points elsewhere.  We choose $X=\mathbb{R}^2\backslash (\{x=0\}\cup \{x=1\})$, and $r(x,y)=\min \{|x|,|1-x|\}$ which is the distance from a point $(x,y)$ to the boundary of $X$. The initial point is $(0.55134554, -0.75134554)$.

Newton's method:  Error: singular Hessian matrix. 

New Q-Newton's method: after 500 steps, arrives at $(1.81294761, 1.81543514)$. 
 
Random Newton's method: Error: singular Hessian matrix. 

BFGS: after $8$ steps, arrives at $(0.06698203, 0.06382963)$. 

Riemannian Newton's method: Error: singular Hessian matrix. 

Riemannian New Q-Newton's method: after 500 steps, arrives at $(1,0.86409541)$. 

In this case, no method could get close to the global minimum, but we can say that the performance of Riemannian New Q-Newton's method is best. 

{\bf Example 6:}  $f(x,y)=5|x|+y$. In this case, there is no critical point. The function is singular at $x=0$. We take $X=\mathbb{R}^2\backslash \{x=0\}$, and $r(x,y)=|x|=$ the distance from the point $(x,y)$ to the boundary of $X$. The initial point is $(-0.99998925,  2.00001188)$.

Newton's method:   Singular Hessian matrix. 

New Q-Newton's method: after 500 steps, arrives at $( -0.99998925, -497.99998812)$. 
 
Random Newton's method: Singular Hessian matrix. 

BFGS: Error "Desired error not necessarily achieved due to precision loss." 

Riemannian Newton's method: Singular Hessian matrix. 

Riemannian New Q-Newton's method: after $500$ steps, arrives at $(-5.17500998e-147,  1.80001403e+000)$. (Converging to the boundary of $X$.) 

In this example, a good method should diverge to infinity. Hence, in this case New Q-Newton's method is best.

\underline{\bf Case 2: Constrained optimisation.} We consider some constrained optimisation problem of the form $\min _{||x||\leq 1}f(x)$, where $f$ is a quadratic function whose Hessian has at least one negative eigenvalue. We note that without the constraint $||x||\leq 1$, the good behaviour is that the sequence converges to infinity. For some of the examples considered below, none of the above mentioned modifications of Newton's method work well, and we will use in addition Riemannian Backtracking GD. The hyperparameters for Riemannian Backtracking GD will be fixed as follows: $\delta _0=1$, $\alpha =0.5$ and $\beta =0.7$. 

{\bf Example 7:} $\min _{||(x,y)||\leq 1}(f(x,y)=x^2+y^2+4xy)$. In this case, globally the function $f$ has a saddle point at $(0,0)$ and no other critical points. It can be checked easily that the minimum in the domain $||(x,y)||\leq 1$ is obtained at $x=-y=\pm \sqrt{0.5}$ $\sim $ $0.707107$. Indeed, the Hessian of this function has 2 eigenvalues $1$ and $-1$. The vector $(1,-1)$ is an eigenvector with eigenvalue $-1$, and hence a minimum of the function in $\{x^2+y^2\leq 1\}$ will be a point $(x,y)$ on the boundary $\{x^2+y^2=1\} $ and parallel to $(1,-1)$.  We take $X=\{(x,y):~x^2+y^2<1\}$, which is an open subset of $\mathbb{R}^2$, and $r(x,y)=1-\sqrt{x^2+y^2}$ the distance from a point $(x,y)$ to the boundary of $X$. The initial point is $(0.1,0.2)\in X$. 

Newton's method:  after 1 step, arrives at $(0,0)$. 

New Q-Newton's method: after 500 steps, arrives at $(-1.6366953e+149,  1.6366953e+149)$. (Outside the domain.)
 
Random Newton's method: after 22 steps, arrives at $(-2.75384279e-12, -5.50768558e-12)$. 

BFGS: after 2 steps, arrives at $(-144.59463151,  134.26644355)$. (Outside the domain,)

Riemannian Newton's method: after 1 step, arrives at the saddle point $(0,0)$. 

Riemannian New Q-Newton's method: after 50 steps, arrives at $(-0.70710678,  0.70710678)$. (Good.)

Riemannian Backtracking GD: after 50 steps, arrives at $(-0.70707318, 0.70714038 )$. (Good)

{\bf Example 8:} $\min _{||(x,y,z)||\leq 1}f(x,y,z)$, where $f$ is a homogeneous quadratic function in $3$ variables, whose Hessian matrix is 
 \[ \left( \begin{array}{ccc}
-23&-61&40\\
-61&-39.5&155\\
40&155&-50\\
\end{array}\right) \]
This matrix has eigenvalues $0,112.5,-225$. We see that one eigenvector corresponding to the eigenvalue $-255$ is close to $(1/3,2/3,-2/3)$, which belongs to the unit sphere. Hence the minimum of the function in the given domain is about $f(1/3,2/3,-2/3)=-112.5$. We choose $X=\{(x,y,z):~x^2+y^2+z^2<1\}$, which is a bounded open subset of $\mathbb{R}^3$.  We choose $r(x,y,z)=1-\sqrt{x^2+y^2+z^2}$, which is the distance from a point $(x,y,z)$ to the boundary of $X$. The initial point is $(1.188e-05, 2.188e-05, 3.188e-05)$.

Newton's method: after 500 steps, arrives at $(-5.96281624e+13,  8.51830891e+12,$ $-2.12957723e+13)$. (Outside the domain.)

New Q-Newton's method: after 500 steps, arrives at $(5.43407609e+130, -7.74275238e+129,$  $1.93872152e+130)$. (Outside the domain.)
 
Random Newton's method: after 322 steps, arrives at $(1695312.72638973, -242187.53234139,$ $ 605468.83085347)$. (Outside the domain.)

BFGS: Warning "Desired error not necessarily achieved due to precision loss".  After 2 steps, arrives at $(-0.02608408, -0.07921891,$  $0.04140426)$. The function value is $-0.852273$. 

Riemannian Newton's method: after 500 steps, arrives at $(-0.9333333,   0.13333339,$ $-0.33333341)$. The function value is $-1.14e-12$. 

Riemannian New Q-Newton's method: after 500 steps, arrives at $(-0.81909064,  0.29130584,$ $-0.49419776)$. The function value is $-7.06$. 

Riemannian Backtracking GD: after 50 steps, arrives at the point $(-0.33909717, -0.63222429,$  $0.69663875)$. The function value is $-112.14$. (Good.)

{\bf Example 9: } $\min _{||(x,y,z)||\leq 1}-f(x,y,z)$, where $f$ is the function in Example 8. We like to check what happens if the negative eigenvalue is not the dominant of the Hessian matrix. Here, the eigenvector of eigenvalue of the Hessian is $\sim (-0.105263, 0.578947, 0.526316)$. The minimum value in the interested domain is $-112.5$. We define $X$, $r$ and the initial point as in Example 8. 

Newton's method: after 50 steps, arrives at $( 6.81582245, -0.97368892,  2.43422228)$. (Outside of the domain.)

New Q-Newton's method: after 50 steps, arrives at $(1.86654033e+11, -4.53462698e+10,$   $5.26285018e+10)$. (Outside of the domain.) 
 
Random Newton's method: After 50 steps, arrives at $(-2.97077388e-05,  4.24432015e-06,$ $-1.06095969e-05)$. The function value is $-1.22e-17$. 

BFGS: Warning "Desired error not necessarily achieved due to precision loss". After 1 step, arrives at $(-0.3406934,   3.43294092,$  $2.32717428)$. (Outside the domain.)

Riemannian Newton's method: after 50 steps, arrives at $(-0.92624563,  0.13232078,$ $-0.33080201)$. The function value is $4.884981308350689e-15$. 

Riemannian New Q-Newton's method: after 50 steps, arrives at $(-0.94036234,  0.17649288,$ $-0.29013372)$. The function value is $-0.21134058944093814$. 

Riemannian Backtracking GD: after 50 steps, arrives at $(-0.13662457,  0.72666381,  0.6732707)$. The function value is $-56.233306328624224$ (Good). Note that the learning rate is very small $3.33e-14$. 

\subsection{A general method for finding minimum on a bounded ball in Euclidean space - With an application to finding minimum eigenvalue of a symmetric matrix} In the experiments in the previous subsection, we see  that for the optimisation problem as the form $\min _{||x||\leq 1}f(x)$, where $x\in \mathbb{R}$, the Riemannian versions (where the Riemannian manifold in question is $\{x\in \mathbb{R}^m:~||x||<1\}$ with $r(x)=1-||x||$) of Backtracking GD and New Q-Newton's methods can sometimes work well and sometimes work not so well. In this subsection, we show that combining that with Riemnannian optimisation on the sphere $\{x:~||x||=1\}$ can yield improved performance. Hence, we state a general method consisting of 2 steps: 

Step 1: Do Riemannian optimisation on the manifold $\{x\in \mathbb{R}^m:~||x||<1\}$, with $r(x)=1-||x||$ and $R_{x}(v)=x+v$. 

Step 2: Do Riemannian optimisation on the manifold $S^{m-1}=\{x\in \mathbb{R}^m:~||x||=1\}$, with $r(x)=\pi$. (Note that, in experiments, we see that even with putting $r(x)=\infty$, still the performance is very good.) Here, there are two ways to choose $R_x(v)$:

Way 1: $R_x(v)=(x+v)/\sqrt{1+||v||^2}$. 

Way 2: (geodesic) $R_x(v)=\cos (||v||)x+\sin (||v||)v/||v||$. 

Step 3: Compare the performances obtained in Steps 1 and 2, and choose the best one. 

We now describe more details the computation of Step 2 for a quadratic function $f_A(x)=<Ax,x>/2$, where $A$ is a symmetric matrix. In this case, if $\lambda _1(A)$ is the smallest eigenvalue of $A$, then $\min _{x\in S^{m-1}}f(x)=\lambda _1(A)/2$. Hence, this problem is interesting also for numerical linear algebra. 

For $S^{m-1}$, we will use the induced metric from $\mathbb{R}^m$. This implies, in particular that if $v\in T_xS^{m-1}$, then $||v||_{T_xS^{m-1}}=||v||_{R^{m}}$.  The computations for Riemannian gradient and Hessian are also quite nice, the next 2  formulas are taken from \cite[Propositions 3.49, Section 5.5]{boumal}: If $x\in S^{m-1}$ and $v\in T_xS^{m-1}$, then (The RHS of the formulas are interpreted in the usual Euclidean setting)
\begin{eqnarray*}
grad(f_A)(x)&=&Ax-<Ax,x>x,\\
Hess(f_A)(x)[v]&=&Av-<Av,x>x-<Ax,x>v.
\end{eqnarray*} 
 
 As we mentioned above, the Riemannian Hessian is symmetric on $T_xS^{m-1}$. On the other hand, its obvious extension (using the same formula on the RHS) to $T_x\mathbb{R}^m$ may be not symmetric. Since it would be more convenient to do calculations with a {\bf symmetric} extension of $Hess(f_A)(x)$ to the whole $T_x\mathbb{R}^m$ (for example, when we want to decompose into positive and negative eigenvalues as in New Q-Newton's method), we will define explicitly such an extension $B:T_x\mathbb{R}^m\rightarrow T_x\mathbb{R}^m$. The most convenient way is to use, for $v\in T_x\mathbb{R}^m$, its orthogonal projection $v-<v,x>x$ to $T_xS^{m-1}$. Hence, we define $B$ by the formula:
 \begin{eqnarray*}
 B[v]:=Hess(f_A)(x)[v-<v,x>x]. 
 \end{eqnarray*}
 
With the above formulas, we can apply Riemannian Backtracking GD and Riemannian New Q-Newton's method, as well as the Riemannian Newton's method and its random damping version. Since we cannot find codes for Riemannian BFGS in the python library, we do not compare it here. On the other hand, since Riemannian Standard GD is easy to code, we will also compare it. We will choose hyperparameters for Riemannian Backtracking GD as in the previous subsection. For the learning rate for Riemannian Standard GD, we fix it to be $0.001$. We see from the experiments that the convergence here is faster than in the previous subsection. 

{\bf Example 7'.} Consider $\min _{||x||=1}f_A(x)$, where $A$ is the matrix 

 \[ \left( \begin{array}{cc}
2&4\\
4&2\\
\end{array}\right) \]

the same function as in Example 7. The initial point will be $x_0/||x_0||=(0.4472136  ,0.89442719)\in S^1$, where $x_0$ is the initial point in Example 7. 

Riemannian Newton's method: after 10 steps, arrives at $(0.70710678, 0.70710678)$. The function value is $3$. 

Riemannian New Q-Newton's method: after 10 steps, arrives at $(-0.70668054,  0.70753276)$. The function value is $-0.9999985474307601$. (Good.)

Riemannian Random Newton's method: after 10 steps, arrives at $(0.70711097, 0.7071026)$. 

Riemannian Backtracking GD: after 3 steps, arrives at $(-0.70691347,  0.70730003)$. (Good.) Learning rate is $0.11764899999999995$. 

Riemannian Standard GD: after 10 steps, arrives at $(0.42499191, 0.90519715)$. The function value is $2.538805861735874$. 

{\bf Example 8'.} Consider $\min _{||x||=1}f_A(x)$, where $A$ is the matrix: 

 \[ \left( \begin{array}{ccc}
-23&-61&40\\
-61&-39.5&155\\
40&155&-50\\
\end{array}\right) \]

the same function as in Example 8. The initial point will be $x_0/||x_0||=(0.29369586,$ $0.54091459,$ $0.78813333)\in S^2$, where $x_0$ is the initial point in Example 8. 

Riemannian Newton's method: after 10 steps, arrives at $(-0.13333661,  0.73330012,  0.66670254)$. The function value is $56.24999961483963$. It is clear that the convergence seems to be the global {\bf maximum}. 

Riemannian New Q-Newton's method: after 10 steps, arrives at $(-0.3344025,  -0.66691779,  0.66587959)$. The function value is $-112.4997765706635$. (Good.)

Riemannian Random Newton's method: after 10 steps, arrives at $(-0.13360975,  0.73323507,  0.6667194 )$. 

Riemannian Backtracking GD: after 10 steps, arrives at $(-0.33333105, -0.66666699,  0.66666748)$. (Good.)

Riemannian Standard GD: after 10 steps, arrives at $(0.32851449, -0.24217129,  0.91292458)$. The function value is $-40.65387811891485$. 

{\bf Example 9':} Consider $\min _{||x||=1}-f_A(x)$, where $A$ is the matrix in Example 8'. We start from the same initial point $x_0/||x_0||=(0.29369586,$ $0.54091459,$ $0.78813333)\in S^2$.

Riemannian Newton's method: after 10 steps, arrives at $(-0.13333661,  0.73330012,  0.66670254)$. The function value is $-56.24999961483963$. (Good.)

Riemannian New Q-Newton's method: after 10 steps, arrives at $(-0.13333307,  0.7333311,   0.66666918)$. (Good.)

Riemannian Random Newton's method: after 10 steps, arrives at  $(-0.13332444,  0.73331822,  0.66668507)$. (Good.)

Riemannian Backtracking GD: after 10 steps, arrives at at $(-0.13328013,  0.73332264,  0.66668907)$. (Good.)

Riemannian Standard GD: after 10 steps, arrives at $(0.01714908, 0.70172638, 0.7122401)$. The function value is $-54.812314763600995$.

\subsection{Conclusions and interesting open questions}\label{SubsectionConclusions} In this paper, we extended the known theoretical results from \cite{truong-nguyen, truong, truong-etal} for versions of Backtracking GD and New Q-Newton's method to the Riemannian setting. We showed that local information on the geometry and the function are sufficient to guarantee good theoretical properties (convergence and/or avoidance of saddle points)  for the method. This allows flexibility in applications. Examples demonstrate that for a singular cost function or for constrained optimisation on a Euclidean space, it could be beneficial to use  Riemannian optimisation. In particular, we have a general method to deal with constrained optimisation problems of the form $\min _{||x||\leq 1}f(x)$. We did explicit calculations for the case $f$ is a quadratic function, in which case its minimum value is the same as the minimum eigenvalue of the Hessian matrix.  Since the sphere is a {\bf compact} Riemannian manifold, we expect from the theoretical results proven in this paper, that the performance of Riemannian New Q-Newton's method and Riemannian Backtracking GD will be good. 

Here are some research directions which we think are interesting and useful for both theoretical and practical considerations.

{\bf Question 1.} Can we achieve the conclusions of Theorems \ref{Theorem2} and \ref{Theorem3} for the Riemannian Backtracking GD algorithm as well? An answer Yes will be extremely in practice, since Riemannian Backtracking GD is easier to implement than the other modifications of its. We note that in the Euclidean setting, if the function $f$ is $C^2$ and the learning rates $\delta _n$ converging to $0$,  and for GD, avoidance of saddle points has been confirmed in \cite{panageas-piliouras-wang}. (If this learning rate scheme converges, then Armijo's condition is satisfied in the long run.) 

{\bf Question 2.} Can we prove parts 2) and 3) of Theorem \ref{Theorem4} without the assumption that the Strong local retraction is Real analytic-like or without the assumption that the numbers $\delta _0,\ldots ,\delta _m$ in Riemannian New Q-Newton's method are random?

{\bf Question 3.} Can we achieve the conclusions of Theorems \ref{Theorem1}, \ref{Theorem2} and \ref{Theorem3} in the stochastic setting, even in the Euclidean setting only?  We note that in the Euclidean setting, for $C^{1,1}_L$ cost functions having some further restrictions (such as having compact sublevels and the gradient also has compact sublevels), together with the assumption on learning rates similar to those in \cite{robbins-monro},  for Stochastic GD it has been shown in \cite{merti-etal} a property similar  to that $\lim _{n\rightarrow\infty}||x_{n+1}-x_n||=0$ almost surely.  

{\bf Question 4.} Can we achieve the conclusions of Theorem \ref{Theorem4} in the stochastic setting, even in the Euclidean setting only?

{\bf Question 5.} Can we have an efficient implementation of Riemannian New Q-Newton's method in large scale optimisation, even in the Euclidean setting only? To a less difficult level, we ask for an efficient implementation of the Riemannian Backtracking GD and Riemannian Local Backtracking GD in Riemannian manifolds. 

{\bf Question 6.} Can we have a modification of Newton's method which has both: fast performance, convergence guarantee, avoidance of saddle points, and not expensive implementation?  

The results from \cite{panageas-piliouras-wang} and \cite{merti-etal} mentioned in Questions 2 and 3 above, are encouraging first steps toward solving the corresponding question. We recall however from the introduction that the implementation in \cite{merti-etal} for Stochastic GD with diminishing learning rates (learning rates converging to $0$), the experimental results are not that good compared with implementations of Stochastic GD (with constant learning rates) and Backtracking GD. This indicates that in order to have more theoretical justifications of the use of GD in large scale optimisation (such as in Deep Neural Networks), it is better that results are proven under more general and practical assumptions.

\end{document}